\documentclass[10pt,twoside]{article}
\usepackage{amsmath,amsbsy,amsfonts}
\usepackage{graphicx,color}
\pagestyle{myheadings} \markboth{}{}
\everymath{\displaystyle}
\usepackage[notref,notcite]{showkeys}
\pretolerance=10000

\newtheorem{theorem}{Theorem}[section]
\newtheorem{lemma}[theorem]{Lemma}
\newtheorem{proposition}[theorem]{Proposition}
\newtheorem{corollary}[theorem]{Corollary}

\newtheorem{claim}[theorem]{Claim}

\newenvironment{proof}[1][Proof]{\noindent\textbf{#1.} }{\ \rule{0.5em}{0.5em}}

\newcommand{\ds}{\displaystyle}
\newcommand{\R}{\mathbb{R}}

\begin{document}

\title{\bf Existence of multi-bump solutions to biharmonic operator with critical exponential growth in $\mathbb{R}^4$.}

\author{Al\^{a}nnio B. N\'{o}brega \thanks{alannio@dme.ufcg.edu.br}\, \, \ Denilson S. Pereira\thanks{denilsonsp@mat.ufcg.edu.br}\ \vspace{2mm}
	\and {\small  Universidade Federal de Campina Grande} \\ {\small Unidade Acad\^emica de Matem\'{a}tica} \\ {\small CEP: 58429-900, Campina Grande - Pb, Brazil}\\}

\date{}
\maketitle

\begin{abstract}
	Using variational methods, we establish existence of multi-bump solutions for the
	following class of problems 
	$$
	\left\{
	\begin{array}{l}
	\Delta^2 u  +(\lambda V(x)+1)u  =  f(u), \quad \mbox{in} \quad \mathbb{R}^{4},\\
	u \in H^{2}(\mathbb{R}^{4}),
	\end{array}
	\right.
	$$
	where $\Delta^2$ is the biharmonic operator, $f$ is a continuous
	function with critical exponential growth and $V : \mathbb{R}^4 \rightarrow \mathbb{R}$ is a continuous function
	verifying some conditions.

	\vspace{0.3cm}
	
	\noindent{\bf Mathematics Subject Classifications (2010):} 35J20,
	35J65
	
	\vspace{0.3cm}
	
	\noindent {\bf Keywords:}  biharmonic operator, critical exponential growth, multi-bump
	solution, variational methods.
\end{abstract}

\section{Introduction}

In this paper,  we are concerned with the existence and multiplicity of multi-bump type solutions for the following class of problems
$$
\left\{ \begin{array}{cc}
\Delta^2 u  +(\lambda V(x)+1)u  & =  f(u), \quad \mbox{in} \quad \mathbb{R}^{4}, \\[0.3cm]
u \in H^2(\mathbb{R}^4);\,\,\,\,\,\,\,\,\,\,\,\,\,\,\,\,&  
\end{array}
\right.\leqno{(P_{\lambda})}
$$
where $\Delta^{2}$ denotes the biharmonic operator,  $\lambda\in(0,\infty)$, $V:\mathbb{R}^4\to\mathbb{R}$ is a continuous and nonnegative potential such that $\Omega:=int\, V^{-1}(0)$ has $k$ connected components denoted by $\Omega_j$, $j\in\{1,\cdots, k\}$  and $f:\mathbb{R}\to\mathbb{R}$ is a continuous function with critical exponential growth, that is, $f$ behaves like $exp(\alpha_0s^2)$ at infinity, for some $\alpha_0>0$. Namely,  there exists $\alpha_0>0$ such that
\begin{equation} \label{expg}
\lim_{|s|\rightarrow\infty}\dfrac{f(s)}{e^{\alpha|s|^2}}=0\ \
\forall \alpha>\alpha_0,\ \
\lim_{|s|\rightarrow\infty}\dfrac{f(s)}{e^{\alpha|s|^2}}=\infty\ \
\forall\alpha<\alpha_0 \,\,\, ( \mbox{see} \, \cite{DMR} \, ).
\end{equation}

In the last years, problems involving the biharmonic operator  has been studied by many researchers, in part because this operator helps to describe the mechanical vibrations of an elastic plate, which among other things describes the traveling waves in a suspension bridge, see \cite{BDF,FG,G,GK,LM}. On the other hand, the biharmonic draws attention by the difficulties encountered when trying to adapt known results for the Laplacian, for example, we cannot always rely on a maximum principle, and also, if $u$ belongs to $ H ^ 2 (\mathbb{R}^4)$,  we cannot claim that $u ^ \pm$ belong to $ H ^ 2 (\mathbb{R}^4)$. 

Concerned multi-bump type solutions, Ding and Tanaka in \cite{D&T} have considered the problem
$$\left\{ \begin{array}{l}
-\Delta u  +(\lambda V(x)+Z(x))u  = u^p, \quad \mbox{in} \quad \mathbb{R}^{N}, \\
u>0,\ \mbox{in}\ \mathbb{R}^N,
\end{array}
\right.\leqno{(\tilde{P}_{\lambda})}$$
In that paper, they showed that problem $(\tilde{P}_{\lambda})$ has at least $2^k-1$ multi-bump type solutions, for $\lambda$ large enough.The same type of result was
obtained by Alves, de Morais Filho and Souto in \cite{A-M-S} and
Alves and Souto \cite{AS}, by assuming that $f$ has a critical
growth for the case $N\geq 3$ and critical exponential  growth when $N=2$, respectively. More recently, Alves and Pereira, in \cite{AP} showed the existence of nodal multi-bump solutions to a same type of problem, but considering a general nonlinearity with critical exponential growth. Related to the existence
of multi-bump solutions for the biharmonic problem $(P_{\lambda})$, Alves and N\'obrega \cite{AN}, developed a technique inspired in the works by Bartsch and Wang \cite{B&W1, B&W2} to show the existence of multi-bump solution. This required, because  it is not clear that the celebrated method developed by del Pino and Felmer \cite{Del&Felm}  can be used to  problems involving biharmonic operator. 

In this work, we intend to prove the existence and multiplicity of multi-bump positive solutions for problem $(P_\lambda)$.  Our main result completes the studies made in the previous papers, because we will work with the biharmonic operator and nonlinearity with a critical exponential growth. Since we will work with critical exponential growth in whole $\mathbb{R}^{4}$, a key point in our arguments is the following Adams-type inequality due to B. Ruf and F. Sani \cite{RS}:

\begin{equation} \label{X2}
\sup_{u\in H^2(\mathbb{R}^4),~\|u\|_{H^2}\leq 1}\int_{\mathbb{R}^4}(e^{\alpha u^2}-1)dx\left\{
\begin{array}{cc}
<+\infty~\mbox{for}~\alpha\leq 32\pi^2,\\ [0.3cm]
=+\infty~\mbox{for}~\alpha> 32\pi^2,
\end{array}
\right.
\end{equation}
where $\|u\|^2_{H^2}:=\|(-\Delta+I)u\|_2^2=\|\Delta u\|_2^2+2\|\nabla u\|_2^2+\|u\|_2^2$.

Next we shall describe the conditions on the functions $V(x)$ and $f(x,s)$ in a more precise way. The  potential potential $V:\mathbb{R}^4 \rightarrow\mathbb{R}$ satisfies
\begin{description}
	\item[$(V_1)$]  $V(x)\geq 0, \ \forall \ x\in \mathbb{R}^4$;
	\item[$(V_2)$]  $\Omega= int V^{-1}(\{0\})$ is a non-empty bounded open set with smooth boundary, consisting of $k$ connected components, more precisely,
	\begin{itemize}
		\item $\Omega=\bigcup_{j=1}^{k}\Omega_j ;$
		\item $dist(\Omega_i,\Omega_j)>0,\ i\neq j.$
	\end{itemize} 
	\item[$(V_3)$]  There is $M_0>0$ such that $ |\{ x\in \mathbb{R}^N ;\, V(x)\leq
	M_0\}|<+\infty.$
\end{description}
Hereafter, if $A \subset \mathbb{R}^{4}$ is a mensurable set, $|A|$ denotes its Lebesgue's measure.

The natural space to treat the problem $(P_\lambda)$ with variational method is
$$
E_{\lambda}=\left\{ u\in H^2(\mathbb{R}^4);\ \int_{\mathbb{R}^4}V(x)\left|u\right|^2 dx <+\infty\right\}.
$$
The subspace $E_{\lambda}$ endowed with the inner product
$$
(u,v)_{\lambda}= \int_{\mathbb{R}^4} \left[\Delta u\Delta v+(\lambda V(x)+1)uv\right]dx,
$$
is a Hilbert space and the norm generated by this inner product will be denoted by $\|\cdot\|_{\lambda}$. The space $E_{\lambda}$ is continuously embedding in $H^2(\mathbb{R}^4),$ that is, there exists $\overline{C}>0$, independent of $\lambda$, such that
\begin{equation}\label{imb}
\|u\|_{H^2}\le \overline{C}\|u\|_{\lambda}, \ \forall u \in E_{\lambda}.
\end{equation}

Related to nonlinearity, we assume that $f:\mathbb{R} \rightarrow \mathbb{R}$ is a $C^1$-function verifying  the following hypotheses:
\begin{description}
	
\item[$(f_1)$] There is $C>0$ such that
$$
|f(s)|,|f'(s)|\leq Ce^{32\pi^2 |s|^2}\ \ \mbox{for all}\ \ s\in\R;
$$

\item[$(f_2)$] $\ds\lim_{s\rightarrow
0}\dfrac{f(s)}{s}=0$;

\item[$(f_3)$] There is $\theta>2$ such that
$$0<\theta F(s):=\theta\int_0^{s}f(t)dt\leq sf(s),\ \ \mbox{for all}\ \
s\in\R\setminus\{0\}.$$

\item[$(f_4)$] The function $s\mapsto\dfrac{f(s)}{|s|}$ is
strictly increasing for $s\neq 0$.

\item[$(f_5)$] There exist constants $p>2$ and $C_p>0$ such  that
$$
f(s)\geq C_p|s|^{p-1}\ \ \mbox{for all}\ \
s\in\R
$$
with
$$
C_p>\left[\dfrac{2\overline{C}^2 k\theta(p-2)}{p(\theta-2)}\right]^{(p-2)/2}S_p^p,~~ \mbox{where}~~ S_p=\displaystyle\max_{1\leq j\leq k}\gamma_j
$$
and
$$\gamma_j=\displaystyle\inf_{u\in H_0^2(\Omega_j)}\dfrac{\left(\int_{\Omega_j}(|\Delta u|^2+u^2)dx\right)^{1/2}}{\left(\int_{\Omega_j}|u|^pdx\right)^{1/p}}.$$

\end{description}

It is easily seen that $(f_1)-(f_5)$ hold for nonlinearities of the form
$$
f(s)=\left(pC_p|s|^{p-2}s+64\pi^2 C_p|s|^ps\right)e^{\alpha s^2}, \quad \mbox{for} \quad \alpha \in (0, 32\pi^2).
$$

\vspace{0.5 cm}

Our main result is the following

\begin{theorem}\label{T1}
	Suppose that $(f_1)-(f_5)$ and $(V_1)-(V_3)$ hold. Then, for each non-empty subset
	$\Gamma \subset \{1,\cdots,k\}$, there exists $\lambda^*>0$ such that problem $(P_{\lambda})$ has a solution $u_{\lambda},$ provided that  $\lambda \ge \lambda^*.$ Moreover, the family $\{u_{\lambda}\}_{\lambda \ge \lambda^*}$ has the following property: for any sequence $\lambda_n \to \infty$, we can extract a
	subsequence $\lambda_{n_i}$ such that $u_{\lambda_{n_i}}$ converges
	strongly in $H^2(\R^4)$ to a function $u$ which satisfies $u(x)=0$
	for $x\notin \Omega_{\Gamma}=\cup_{j \in \Gamma}\Omega_j$, and the restriction $u|_{\Omega_j}$
	is a solution with least energy of

	\begin{equation}\label{4}
	\left\{ \begin{array}{c}
	\Delta^2 u  +u  = f(u), \quad \mbox{in} \quad \Omega_j \\
	u=\frac{\partial u}{\partial \eta}=0,\quad  \mbox{on} \quad \partial\Omega_j  .
	\end{array}
	\right.
	\end{equation}
\end{theorem}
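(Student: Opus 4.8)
The plan is to combine a del Pino--Felmer--type penalization with a Ding--Tanaka multi-bump deformation, invoking the Ruf--Sani inequality \eqref{X2} at every step where the critical term must be controlled. Fix a non-empty $\Gamma\subset\{1,\dots,k\}$ and put $\Omega_\Gamma=\cup_{j\in\Gamma}\Omega_j$. Since the components $\Omega_j$ with $j\notin\Gamma$ also lie in $V^{-1}(\{0\})$, the classical del Pino--Felmer cut-off (which uses a positive lower bound of $V$ outside the well) is unavailable; however the zeroth-order coefficient $\lambda V(x)+1$ is bounded below by $1$ everywhere, and it is this uniform positivity that I would exploit. First extend $f$ by $0$ on $(-\infty,0)$, then fix a small $\nu\in(0,1/k)$ and set $\widehat f(s)=\min\{f(s),\nu s\}$ for $s\ge 0$ (so $\widehat f=f$ on $[0,a]$ and $\widehat f(s)=\nu s$ for $s\ge a$, where $f(a)=\nu a$, the crossing point existing by $(f_2)$ and $(f_4)$). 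The penalized nonlinearity is $g(x,s)=\chi_{\Omega_\Gamma}(x)f(s)+(1-\chi_{\Omega_\Gamma}(x))\widehat f(s)$, with functional
\[
I_\lambda(u)=\frac12\|u\|_\lambda^2-\int_{\R^4}G(x,u)\,dx,\qquad G(x,s)=\int_0^s g(x,t)\,dt .
\]
By $(f_1)$--$(f_4)$ and \eqref{X2} this functional has the mountain-pass geometry, while the truncation makes the nonlinearity linearly dominated (by $\nu s<s$) on $\R^4\setminus\Omega_\Gamma$, which is the mechanism that will confine solutions to $\Omega_\Gamma$.

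Next I would study the limit problems. For each $j\in\Gamma$, problem \eqref{4} possesses a least-energy (mountain-pass) solution $w_j\in H_0^2(\Omega_j)$ with energy $c_j$, and I set $c_\Gamma=\sum_{j\in\Gamma}c_j$. Hypothesis $(f_5)$ is designed precisely to place this level below the threshold dictated by \eqref{X2}: testing with the extremal-type function associated with $\gamma_j$ and using the lower bound $f(s)\ge C_p|s|^{p-1}$ together with the explicit size of $C_p$ (whose $k$-dependence is tailored so that even $\sum_{j=1}^{k}c_j<d^\ast$), one estimates each $c_j$, hence $c_\Gamma$, strictly below the compactness level $d^\ast$ beyond which the Adams supremum is no longer attained. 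This sub-threshold bound is what unlocks every compactness argument to follow.

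I would then prove that, for $\lambda$ large, $I_\lambda$ satisfies the Palais--Smale condition at all levels $c<d^\ast$. Here $(V_3)$ enters: the finiteness of $|\{V\le M_0\}|$ forces the embeddings $E_\lambda\hookrightarrow L^q(\R^4)$ to gain compactness as $\lambda\to\infty$ (mass escaping to infinity is suppressed by the deep well $\lambda V$), the penalization controls the tail of the nonlinearity on $\R^4\setminus\Omega_\Gamma$, and \eqref{X2} with $c<d^\ast$ rules out concentration of the exponential term. With this local $(PS)$ in hand, the multi-bump critical point is produced by a Ding--Tanaka minimax built from the $w_j$: one deforms sums $\sum_{j\in\Gamma}t_j\,w_j$ inside $E_\lambda$ in such a way that the deformation must retain energy on every component $\Omega_j$, $j\in\Gamma$, and shows that the resulting minimax value $b_{\lambda,\Gamma}$ converges to $c_\Gamma$ as $\lambda\to\infty$, yielding a critical point $u_\lambda$ of $I_\lambda$ whose energy is asymptotically distributed as $\sum_{j\in\Gamma}c_j$.

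The two genuinely delicate steps come last. First, I must show that $u_\lambda$ actually solves $(P_\lambda)$, i.e.\ that $g(x,u_\lambda)=f(u_\lambda)$; this amounts to $0\le u_\lambda\le a$ on $\R^4\setminus\Omega_\Gamma$ for $\lambda$ large, so that the penalization is inactive there. Because $\Delta^2$ admits no maximum principle and $u^\pm\notin H^2(\R^4)$, I cannot argue pointwise; instead I would derive a uniform $L^\infty$ estimate on $\R^4\setminus\Omega_\Gamma$ by a Moser-type iteration adapted to the biharmonic operator, combined with the fact that the energy of $u_\lambda$ outside $\Omega_\Gamma$ tends to $0$ as $\lambda\to\infty$. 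Second, letting $\lambda_n\to\infty$, I would use these uniform bounds together with a Brezis--Lieb / $(PS)$ analysis to extract a subsequence converging strongly in $H^2(\R^4)$ to some $u$; the penalization and $(V_3)$ force $u\equiv 0$ off $\Omega_\Gamma$, while the identity $b_{\lambda_n,\Gamma}\to c_\Gamma$ and lower semicontinuity of the energy force each $u|_{\Omega_j}$ to be a least-energy solution of \eqref{4}. I expect the $L^\infty$/localization estimate of the first step to be the main obstacle, since it is exactly the place where the absence of a maximum principle for $\Delta^2$ must be circumvented.
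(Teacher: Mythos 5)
Your proposal is not the paper's argument, and it rests on a step that the paper deliberately organizes its entire strategy to avoid. You build a del Pino--Felmer penalization ($g(x,s)=\chi_{\Omega_\Gamma}f(s)+(1-\chi_{\Omega_\Gamma})\widehat f(s)$) and defer to the end the proof that the penalization is inactive, i.e.\ that $0\le u_\lambda\le a$ on $\R^4\setminus\Omega_\Gamma$, to be obtained by ``a Moser-type iteration adapted to the biharmonic operator.'' This is precisely the gap: for $\Delta^2$ there is no such tool at hand. Moser/De Giorgi iterations are powered by testing the equation with truncations and powers of the solution; for a fourth-order operator, $|u|^{\beta-1}u$ and $(u-a)^+$ are in general not admissible $H^2$ test functions, there is no chain-rule identity turning $\int\Delta u\,\Delta(|u|^{\beta-1}u)$ into a positive quantity, and no maximum principle to fall back on. Even the preliminary positivity you need ($u_\lambda\ge 0$, obtained for the Laplacian by extending $f$ by zero and testing with $u^-$) is unavailable, since $u^\pm\notin H^2(\R^4)$ when $u\in H^2(\R^4)$ --- a point the paper makes in its introduction, where it states explicitly that it is not clear the del Pino--Felmer method can be used for the biharmonic operator, and that this is the reason for choosing a different route. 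So your scheme, as written, cannot close: without the $L^\infty$/localization estimate you never conclude $g(x,u_\lambda)=f(u_\lambda)$, hence never produce a solution of $(P_\lambda)$.

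For contrast, the paper (following Bartsch--Wang and Alves--N\'obrega) never truncates the nonlinearity. It works with the original functional $I_\lambda$ on $E_\lambda$, proves the $(PS)_c$ condition for $c\in[0,S]$ and $\lambda$ large using $(V_3)$ and the Adams-type inequality (with $(f_5)$ guaranteeing, via Lemma \ref{S}, that Palais--Smale sequences stay below the Adams threshold), and proves a $(PS)_\infty$ result (Proposition \ref{p2}) in which the term $\lambda_n\int V|u_n|^2$ itself forces the weak limit to vanish off $\overline{\Omega}$ and yields strong $H^2$ convergence. Localization onto exactly the components indexed by $\Gamma$ is not achieved pointwise but variationally: through the constrained minimax class $\Sigma_\lambda$ (paths equal to $\gamma_0$ on $\partial\overline{Q}$ with $I_{\lambda,\R^4\setminus\Omega'_\Gamma}\ge 0$), a topological degree argument (Lemma \ref{l6}), the neighborhoods $A_\mu^\lambda$ recording that the energy on each $\Omega'_j$, $j\in\Gamma$, stays near $c_j$, and a deformation flow producing a critical point $u_\lambda\in A_\mu^\lambda\cap I_\lambda^{c_\Gamma}$; finally, a uniform lower bound $\sigma_1>0$ on the $H_0^2(\Omega_j)$-norm of any nontrivial solution of the limit problem rules out bumps on components $j\notin\Gamma$. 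If you want to salvage a penalization approach here, the missing ingredient you would have to supply is exactly a regularity/smallness theorem for fourth-order equations replacing the maximum principle --- and that is an open obstruction, not a routine adaptation.
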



\section{Preliminary remarks}

In this section we fix some notations and introduce some preliminary results that will be fundamental in the development of this work.
 
Hereafter, if $\Theta \subset \mathbb{R}^{4}$ is a mensurable set, we denote $E_\lambda(\Theta)$ the space 
$$
E_{\lambda}(\Theta)=\left\{ u\in H^{2}(\Theta);\ \int_{\Theta}V(x)\left|u\right|^2 dx <+\infty\right\}.
$$
endowed with the the inner product
$$
(u,v)_{\lambda,\Theta}= \int_{\Theta} \left[\Delta u\Delta v+(\lambda V(x)+1)uv\right]dx.
$$
The norm associated with this inner product will be denoted by $\|\cdot\|_{\lambda,\Theta}$.

The following estimate involving $f$, which is a consequence of our assumptions $(f_1)$ and $(f_2)$, is a key point in this work:

\noindent{\bf Main estimate.} Fixed $q\geq 0,~\eta>0$ and $\tau>1$, there exists $C>0$ such that
\begin{equation}\label{me}
|f(u)|\leq \eta|u|+C|u|^q(e^{32\pi^2\tau u^2}-1),~~\mbox{for all}~~u\in\mathbb{R}.
\end{equation}

In what follows, we establish an important inequality used in this paper. Its proof can be found in F. Sani \cite{S}.
\begin{lemma}\label{lm1}
Let $\alpha>0$ and $q\geq 2$. If $M>0$ and $\alpha M^2<32\pi^2$ then there exists a constant $C=C(\alpha,q,M)>0$ such that
$$
\int_{\mathbb{R}^4}(e^{\alpha u^2}-1)|u|^qdx\leq C\|u\|_{H^2}^q.
$$
holds for any $u\in H^2(\mathbb{R}^4)$ with $\|u\|_{H^2}\leq M$.
\end{lemma}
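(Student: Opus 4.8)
The plan is to reduce the weighted exponential integral to the pure Adams--Ruf--Sani inequality \eqref{X2} by means of H\"older's inequality, paying careful attention to keep the exponential constant strictly below the critical threshold $32\pi^2$. Since $\alpha M^2<32\pi^2$, by continuity I can fix $r>1$ sufficiently close to $1$ so that $r\alpha M^2<32\pi^2$ still holds; let $r'$ denote its conjugate exponent. Assuming $u\neq 0$ (the case $u=0$ being trivial) and applying H\"older's inequality to the product $(e^{\alpha u^2}-1)\,|u|^q$ gives
\begin{equation*}
\int_{\mathbb{R}^4}(e^{\alpha u^2}-1)|u|^q\,dx\leq\left(\int_{\mathbb{R}^4}(e^{\alpha u^2}-1)^r\,dx\right)^{1/r}\left(\int_{\mathbb{R}^4}|u|^{qr'}\,dx\right)^{1/r'}.
\end{equation*}

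The first factor is handled by the elementary inequality $(e^{t}-1)^r\leq e^{rt}-1$, valid for $t\geq 0$ and $r\geq 1$: both sides vanish at $t=0$, and differentiating reduces the claim to $e^t\geq e^t-1$. Hence $(e^{\alpha u^2}-1)^r\leq e^{r\alpha u^2}-1$. Writing $u=\|u\|_{H^2}\,v$ with $\|v\|_{H^2}=1$, and using $r\alpha\|u\|_{H^2}^2\leq r\alpha M^2<32\pi^2$, I obtain
\begin{equation*}
\int_{\mathbb{R}^4}(e^{\alpha u^2}-1)^r\,dx\leq\int_{\mathbb{R}^4}(e^{r\alpha M^2 v^2}-1)\,dx\leq\sup_{\|v\|_{H^2}\leq 1}\int_{\mathbb{R}^4}(e^{32\pi^2 v^2}-1)\,dx=:K<+\infty,
\end{equation*}
where the finiteness of $K$ is exactly the content of \eqref{X2}. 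Thus the first factor is bounded by the constant $K^{1/r}$, independently of $u$.

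For the second factor I invoke the continuous Sobolev embedding $H^2(\mathbb{R}^4)\hookrightarrow L^{s}(\mathbb{R}^4)$, which in dimension four holds for every $s\in[2,\infty)$; since $qr'\geq q\geq 2$ and $qr'<\infty$, there is $C_{qr'}>0$ with $\|u\|_{qr'}\leq C_{qr'}\|u\|_{H^2}$, so that
\begin{equation*}
\left(\int_{\mathbb{R}^4}|u|^{qr'}\,dx\right)^{1/r'}=\|u\|_{qr'}^{q}\leq C_{qr'}^{q}\,\|u\|_{H^2}^{q}.
\end{equation*}
Combining the two estimates yields the claim with $C=K^{1/r}C_{qr'}^{q}$, which depends only on $\alpha$, $q$ and $M$, through the choice of $r$ and the embedding constant.

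The main obstacle is controlling the first factor, namely the step that multiplies the exponent by $r$ when separating the integrand by H\"older. This is precisely why the hypothesis $\alpha M^2<32\pi^2$ must be a \emph{strict} inequality: it leaves room to choose $r>1$ without the constant $r\alpha M^2$ reaching the critical value $32\pi^2$, beyond which the supremum in \eqref{X2} diverges and the whole estimate collapses. Once the exponential term is tamed uniformly, the remainder is a routine interpolation between this uniform exponential bound and the subcritical Sobolev embedding.
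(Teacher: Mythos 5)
Your proof is correct. Note that the paper itself does not prove Lemma \ref{lm1}: it defers entirely to F. Sani \cite{S}, so there is no internal proof to compare against; your argument is the standard (and almost certainly Sani's own) route, namely H\"older's inequality with an exponent $r>1$ chosen so that $r\alpha M^2<32\pi^2$, the elementary bound $(e^t-1)^r\le e^{rt}-1$, the sharp Adams inequality \eqref{X2} applied to the normalized function $v=u/\|u\|_{H^2}$, and the borderline Sobolev embedding $H^2(\mathbb{R}^4)\hookrightarrow L^s(\mathbb{R}^4)$ for $2\le s<\infty$ (this is where the hypothesis $q\ge 2$ enters, since it guarantees $qr'\ge 2$). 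Your remark that the strictness of $\alpha M^2<32\pi^2$ is exactly what leaves room to choose $r$ is the right conceptual point. One small wording fix: differentiating $(e^t-1)^r\le e^{rt}-1$ reduces the claim not to $e^t\ge e^t-1$ but to $(e^t-1)^{r-1}\le e^{(r-1)t}$, which follows by raising $e^t-1\le e^t$ to the power $r-1\ge 0$; the inequality itself is of course valid, and all constants you produce depend only on $\alpha$, $q$, $M$, as required.
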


This inequality together with the Main estimate (\ref{me}) implies that the energy functional $I_{\lambda}:E_{\lambda}\rightarrow \mathbb{R}$, associated to $(P_{\lambda})$  given by
$$
I_{\lambda}(u)=\frac{1}{2}\int_{\mathbb{R}^4}\left[\left|\Delta u\right|^2+(\lambda V(x)+1)\left|u\right|^2\right]dx-\int_{\mathbb{R}^4}F(u)dx,
$$ is well defined. Furthermore, using standard arguments, it is possible to prove that $I_\lambda$ is of class $C^1$ with
$$
I_\lambda'(u)v=\int_{\mathbb{R}^4}(\Delta u\Delta v+(\lambda V(x)+1)uv)dx+\int_{\mathbb{R}^4}f(u)vdx ~~\mbox{for all}~~ u,v\in E_\lambda,
$$
and its critical points are solutions of our problem $(P_\lambda)$. 

To finish this section, we state the following consequence of the Adams-type inequality.

\begin{corollary}\label{cc1}
Let $\{u_n\}$ be a sequence in $H^2(\mathbb{R}^4)$ with $\displaystyle\limsup\{\|u_n\|^2_{H^2}\}\leq m<1$. For $\tau,~q>1$ satisfying $\tau qm<1$, there exists $C=C(\tau, q, m)>0$ such that $b_\tau(u_n):=(e^{32\pi^2\tau u_n^2}-1)$ belongs to $L^q(\mathbb{R}^4)$ and
$$
\|b_\tau(u_n)\|_{L^q(\mathbb{R}^4)}\leq C.
$$

\end{corollary}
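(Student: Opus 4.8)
The plan is to reduce the claim to a direct application of the Adams-type inequality \eqref{X2} via a normalization argument. The key observation is that \eqref{X2} controls the integral of $e^{\alpha u^2}-1$ at the single exponent $\alpha=32\pi^2$, precisely when $\|u\|_{H^2}\le 1$; here I must instead estimate the $L^q$ norm of $b_\tau(u_n)=e^{32\pi^2\tau u_n^2}-1$, which means controlling $\int_{\R^4}(e^{32\pi^2\tau u_n^2}-1)^q\,dx$. First I would handle the exponent inside the integral. Since $(e^t-1)^q\le e^{qt}-1$ fails in general, the cleaner route is to bound $(e^{32\pi^2\tau u_n^2}-1)^q\le C_q(e^{32\pi^2 q\tau u_n^2}-1)$ for a constant $C_q$ depending only on $q$; this elementary pointwise inequality (valid because $q>1$, absorbing the lower-order cross terms) converts the $L^q$ norm into a single exponential integral at the enlarged exponent $q\tau$.

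Next I would exploit the hypothesis $\limsup_n\|u_n\|_{H^2}^2\le m<1$ together with $\tau q m<1$. Fix $\beta$ with $\tau q m<\beta<1$; for $n$ large we have $\|u_n\|_{H^2}^2\le \beta/(\tau q)$, so setting $w_n:=u_n/\|u_n\|_{H^2}$ gives $\|w_n\|_{H^2}=1$ and
\begin{equation}\label{normaliz}
32\pi^2 q\tau u_n^2=32\pi^2 q\tau\|u_n\|_{H^2}^2\,w_n^2\le 32\pi^2\beta\, w_n^2.
\end{equation}
Because $\beta<1$, the exponent $32\pi^2\beta$ is strictly below the critical threshold $32\pi^2$, so applying \eqref{X2} to $w_n$ (after a further rescaling $w_n\mapsto \sqrt{\beta}\,w_n$, whose $H^2$-norm is $\sqrt{\beta}\le 1$) yields a bound $\int_{\R^4}(e^{32\pi^2\beta w_n^2}-1)\,dx\le C$ with $C$ independent of $n$. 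Chaining this with the pointwise inequality from the first step produces $\|b_\tau(u_n)\|_{L^q}^q\le C_q\cdot C$, which is the desired uniform bound. The finitely many small-$n$ terms are handled separately, since each $u_n\in H^2(\R^4)$ gives a finite integral by Lemma \ref{lm1} (or directly by \eqref{X2}).

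The main obstacle I anticipate is purely bookkeeping at the exponent level: one must verify that after inflating the exponent by the factor $q$ (from the $L^q$ norm) and by $\tau$ (the prescribed dilation), the product $q\tau$ multiplied by the norm bound $m$ still leaves strict room below the critical constant. This is exactly what the hypothesis $\tau q m<1$ guarantees, and it is the reason the statement is phrased with that precise inequality rather than merely $m<1$. Care is needed to choose the intermediate $\beta$ and to pass to a tail of the sequence, so that the supremum in \eqref{X2} is genuinely applicable; once the normalization \eqref{normaliz} is set up correctly, the remainder is a routine application of the Adams inequality and the elementary $(e^t-1)^q$ estimate.
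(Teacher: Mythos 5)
Your proposal is correct and takes essentially the same route as the paper's proof: the pointwise bound $|b_\tau(u_n)|^q\leq C\left(e^{32\pi^2(\sqrt{\tau q}\,u_n)^2}-1\right)$ converting the $L^q$-power into the enlarged exponent $\tau q$, followed by an application of the Adams-type inequality (\ref{X2}) to the rescaled functions, whose $H^2$-norms stay below $1$ precisely because $\tau qm<1$. Your handling of the tail of the sequence is in fact slightly more careful than the paper's one-line argument (which tacitly treats the limsup hypothesis as a uniform bound); note only that the inequality $(e^t-1)^q\le e^{qt}-1$, which you dismiss as false, actually does hold for $t\ge 0$ and $q\ge 1$ (since $(a-b)^q\le a^q-b^q$ for $a\ge b\ge 0$), though the weaker version with the constant $C_q$ that you use is perfectly sufficient.
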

\begin{proof}
Note that there exists $C>0$ such that
$$
|b_\tau(u_n)|^q\leq C\left(e^{32\pi^2(\sqrt{\tau q}u_n)^2}-1\right)
$$
and since $\|\sqrt{\tau q}u_n\|_{H^2}^2\leq\tau qm<1$, from Adams-type inequality $(\ref{X2})$
$$
\sup_{n}\{\|b_\tau(u_n)\|_{L^q(\mathbb{R}^4)}\}<\infty
$$
and the proof is completed.
\end{proof}

In the next, we define the constant $$S:=\sum_{j=1}^kc_j,$$ which  plays an important role in the study of Palais-Smale sequences, where $c_j$ is the minimax level of Mountain Pass Theorem related to the functional $I_j: H_0^2(\Omega_j)\to\mathbb{R}$ given by
$$
I_j(u)=\dfrac{1}{2}\int_{\Omega_j}(|\Delta u|^2+u^2)dx-\int_{\Omega_j}F(u)dx.
$$

The following result presents an estimate from above for the constant $S$. 
\begin{lemma}\label{S}
	If $(f_1)-(f_5)$ hold, then $S\in(0,(\theta-2)/4\theta\overline{C}^2)$.
\end{lemma}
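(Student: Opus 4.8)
The plan is to prove the two inclusions in $S\in\bigl(0,(\theta-2)/4\theta\overline{C}^2\bigr)$ separately, the positivity being routine and the upper bound being the real content. For positivity, it suffices to show each Mountain Pass level $c_j>0$, since $S=\sum_{j=1}^k c_j$. This follows from verifying that $I_j$ has the Mountain Pass geometry on $H_0^2(\Omega_j)$: the Main estimate $(\ref{me})$ together with Lemma \ref{lm1} (Adams/Sobolev embeddings on the bounded set $\Omega_j$) gives $\int_{\Omega_j}F(u)\,dx=o\bigl(\int_{\Omega_j}(|\Delta u|^2+u^2)\,dx\bigr)$ near the origin, so there is a well of positive depth around $0$; and $(f_3)$ forces $F$ to be superquadratic, producing a point of negative energy far out. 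Hence each $c_j$ is a genuine positive minimax level and $S>0$.

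For the upper bound I would exploit $(f_5)$ through its integrated form $F(s)\ge \frac{C_p}{p}|s|^p$. Writing $A_j(u)=\int_{\Omega_j}(|\Delta u|^2+u^2)\,dx$ and $B_j(u)=\int_{\Omega_j}|u|^p\,dx$, this yields, for every $u\in H_0^2(\Omega_j)\setminus\{0\}$ and every $t\ge0$,
$$
I_j(tu)\le \frac{t^2}{2}A_j(u)-\frac{C_p}{p}t^p B_j(u).
$$
The scalar function on the right is maximized over $t\ge0$ at $t_*=\bigl(A_j(u)/(C_pB_j(u))\bigr)^{1/(p-2)}$, with maximal value $\frac{p-2}{2p}\,A_j(u)^{p/(p-2)}(C_pB_j(u))^{-2/(p-2)}$. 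Since for fixed $u$ the segment $t\mapsto tu$ is, for $t$ large, an admissible Mountain Pass path, $c_j\le \max_{t\ge0}I_j(tu)$; taking the infimum over $u$ and recognizing the resulting quotient as a power of $\gamma_j$ gives
$$
c_j\le \frac{p-2}{2p}\,C_p^{-2/(p-2)}\,\gamma_j^{2p/(p-2)}.
$$

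To finish, I would bound $\gamma_j\le S_p=\max_{1\le j\le k}\gamma_j$ and sum over $j$, obtaining $S\le \frac{(p-2)k}{2p}\,C_p^{-2/(p-2)}\,S_p^{2p/(p-2)}$. The last step is purely algebraic: raising the hypothesis on $C_p$ in $(f_5)$ to the power $2/(p-2)$ and using $\bigl(S_p^{2p/(p-2)}\bigr)^{(p-2)/2}=S_p^{p}$ shows that the strict lower bound $C_p>\bigl[2\overline{C}^2k\theta(p-2)/p(\theta-2)\bigr]^{(p-2)/2}S_p^p$ is exactly equivalent to $\frac{(p-2)k}{2p}C_p^{-2/(p-2)}S_p^{2p/(p-2)}<(\theta-2)/4\theta\overline{C}^2$, which gives the claim. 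The main obstacle here is not analytic but bookkeeping: correctly carrying the exponents $p/(p-2)$ and $2/(p-2)$ through the optimization so that the reduction to $\gamma_j$ and $S_p$ lines up precisely with the constant imposed in $(f_5)$; the strictness of the final inequality is inherited directly from the strict inequality on $C_p$.
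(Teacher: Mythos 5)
Your proof is correct and follows essentially the same route as the paper: lower-bound $F$ by $\frac{C_p}{p}|s|^p$ via $(f_5)$, estimate $c_j\le\max_{t\ge 0}I_j(tu)$ along rays, optimize in $t$, identify the resulting quotient with $\gamma_j^{2p/(p-2)}\le S_p^{2p/(p-2)}$, and feed the hypothesis on $C_p$ into the sum over $j$. Two minor points in your favor: your constant $\frac{p-2}{2p}$ is the correct value of the maximum (the paper writes $\frac{p-2}{4p}$, an apparent slip which still yields the stated bound, only with extra slack), and you also argue $S>0$ explicitly via the mountain pass geometry and avoid assuming the infimum $\gamma_j$ is attained, both of which the paper leaves implicit.
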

\begin{proof}
	In order to prove this inequality, for each $j\in\{1,\cdots, k\}$, let us fix a positive function $u_j\in H^2_0(\Omega_j)$ such that
	$$
	\gamma_j=\displaystyle\inf_{u\in H_0^2(\Omega_j)}\dfrac{\left(\int_{\Omega_j}(|\Delta u|^2+u^2)dx\right)^{1/2}}{\left(\int_{\Omega_j}|u|^pdx\right)^{1/p}}=\dfrac{\left(\int_{\Omega_j}(|\Delta u_j|^2+u_j^2)dx\right)^{1/2}}{\left(\int_{\Omega_j}|u_j|^pdx\right)^{1/p}}.
	$$
	It follows from the minimax characterizaton of $c_j$ and hypothesis $(f_5)$ that
	$$
	\begin{array}{rcl}
	c_j&\leq &\max_{t\geq 0} I_j(tu_j)\\[0.5cm]
	&\leq& \max_{t\geq 0}\left[\dfrac{t^2}{2}\int_{\Omega_j}(|\Delta u_j|^2+|u_j|^2)dx-\dfrac{t^pC_p}{p}\int_{\Omega_j}|u_j|
	^pdx\right]\\[0.5cm]
	&= &\dfrac{p-2}{4p}\dfrac{\gamma_j^{2p/(p-2)}}{C_p^{2/(p-2)}},
	\end{array}
	$$
	hence
	$$
	S=\sum_{j=1}^kc_j\leq k\dfrac{p-2}{4p}\dfrac{S_p^{2p/(p-2)}}{C_p^{2/(p-2)}}.
	$$
	On the other hand, by $(f_5)$
	$$
	k\dfrac{p-2}{4p}\dfrac{S_p^{2p/(p-2)}}{C_p^{2/(p-2)}}<\dfrac{\theta- 2}{4\theta\overline{C}^2}.
	$$
\end{proof}

\section{The $(PS)_c$ Condition}

In this section, we study some results about the Palais-Smale sequences related to $I_\lambda$, that is, of sequences $\{u_n\}\subset E_\lambda$ verifying
$$
I_\lambda(u_n)\to c~~~\mbox{and}~~~I_\lambda'(u_n)\to 0,
$$
for some $c\in\mathbb{R}$ (shortly $\{u_n\}$ is a $(PS)_c$ sequence).
\begin{lemma}\label{l1}
	Let $\{u_n\} \subset E_{\lambda}$ be a $(PS)_c$ sequence  for
	$I_{\lambda},$ then $\{u_n\}$ is bounded. Furthermore, 
$$\limsup_{n\to\infty}\|u_n\|_\lambda^2\leq\dfrac{2\theta c}{\theta-2}.$$
\end{lemma}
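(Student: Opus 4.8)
The plan is to use the standard Ambrosetti--Rabinowitz machinery supplied by hypothesis $(f_3)$. Let $\{u_n\}\subset E_\lambda$ be a $(PS)_c$ sequence, so that $I_\lambda(u_n)\to c$ and $I_\lambda'(u_n)\to 0$; the latter means $\|I_\lambda'(u_n)\|_{E_\lambda'}\to 0$, hence $I_\lambda'(u_n)u_n = o(1)\|u_n\|_\lambda$. The first step is to form the combination $I_\lambda(u_n)-\frac{1}{\theta}I_\lambda'(u_n)u_n$, which is designed precisely so that the nonlinear terms cooperate rather than cancel the quadratic part. Writing it out,
\begin{equation}
I_\lambda(u_n)-\dfrac{1}{\theta}I_\lambda'(u_n)u_n = \left(\dfrac{1}{2}-\dfrac{1}{\theta}\right)\|u_n\|_\lambda^2 + \int_{\mathbb{R}^4}\left[\dfrac{1}{\theta}f(u_n)u_n - F(u_n)\right]dx.
\end{equation}

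By $(f_3)$ the integrand satisfies $\frac{1}{\theta}f(s)s - F(s)\geq 0$ for every $s$, so the integral term is nonnegative and may be discarded when bounding $\|u_n\|_\lambda^2$ from above. The left-hand side, on the other hand, is controlled: $I_\lambda(u_n)\to c$ is bounded, and $\frac{1}{\theta}I_\lambda'(u_n)u_n$ is bounded by $\frac{1}{\theta}\,o(1)\|u_n\|_\lambda$. Thus I obtain an inequality of the shape $\left(\frac{1}{2}-\frac{1}{\theta}\right)\|u_n\|_\lambda^2 \leq c + o(1) + o(1)\|u_n\|_\lambda$. Since $\theta>2$ forces the coefficient $\frac{1}{2}-\frac{1}{\theta}=\frac{\theta-2}{2\theta}$ to be strictly positive, this is a quadratic inequality in $\|u_n\|_\lambda$ with positive leading coefficient and at most linear growth on the right, which immediately yields that $\{\|u_n\|_\lambda\}$ is bounded; boundedness in $E_\lambda$ is exactly the first claim.

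For the sharper limsup estimate I pass to the limit in the same inequality. Having established boundedness, the term $o(1)\|u_n\|_\lambda$ now genuinely tends to $0$, and the nonnegative integral is dropped, so taking $\limsup$ gives $\frac{\theta-2}{2\theta}\limsup_n\|u_n\|_\lambda^2 \leq c$, which rearranges to the asserted bound $\limsup_n\|u_n\|_\lambda^2\leq \frac{2\theta c}{\theta-2}$.

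I do not anticipate a genuine obstacle here, since this is the classical argument and the only structural input needed is $(f_3)$ together with $\theta>2$; the critical exponential growth and the Adams inequality play no role in this particular lemma. The one point demanding mild care is the bookkeeping of the $o(1)$ terms: I must keep the factor of $\|u_n\|_\lambda$ explicit so that the quadratic-inequality conclusion is rigorous, and I should verify that $F$ is well defined and finite along the sequence, which follows from the Main estimate \eqref{me} and Lemma~\ref{lm1} guaranteeing $I_\lambda\in C^1$. Once boundedness is in hand the limsup refinement is a one-line passage to the limit.
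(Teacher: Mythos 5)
Your proposal is correct and follows essentially the same argument as the paper: form $I_\lambda(u_n)-\frac{1}{\theta}I_\lambda'(u_n)u_n$, discard the nonnegative integral term via $(f_3)$, and deduce boundedness and the limsup bound from the resulting quadratic inequality $\left(\frac{1}{2}-\frac{1}{\theta}\right)\|u_n\|_\lambda^2\leq c+o_n(1)+\epsilon_n\|u_n\|_\lambda$. Your extra remarks on the $o(1)\|u_n\|_\lambda$ bookkeeping and on the well-definedness of $F$ along the sequence are sound but not needed beyond what the paper records.
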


\begin{proof}
	Since $\{u_n\}$ is a $(PS)_c$ sequence,
	$$
	I_{\lambda}(u_n)\rightarrow c\ \mbox{and}\ I'_{\lambda}(u_n) \rightarrow 0.
	$$
	Thereby, for $n$ large enough,
	\begin{equation}\label{5}
	I_{\lambda}(u_n)-\frac{1}{\theta}I'_{\lambda}(u_n)u_n \leq c+o_n(1)+\epsilon_n\left|\left|u_n\right|\right|_{\lambda},
	\end{equation}
where $\epsilon_n\to 0$.	On the other hand,
	$$
	I_{\lambda}(u_n)-\frac{1}{\theta}I'_{\lambda}(u_n)u_n=\left( \frac{1}{2}-\frac{1}{\theta}\right)\|u_n\|^2_{\lambda}+\int_{\mathbb{R}^4}\left[ \frac{1}{\theta}f(u_n)u_n-F(u_n)\right]\,dx.
	$$
	Then, by $(f_3)$, 
	\begin{equation}\label{6}I_{\lambda}(u_n)-\frac{1}{\theta}I'_{\lambda}(u_n)u_n\geq \left( \frac{1}{2}-\frac{1}{\theta}\right)\|u_n\|^2_{\lambda}.
	\end{equation}
	Gathering $ (\ref{5}) $ and $ (\ref{6}) $, we get 
	$$
	\left( \frac{1}{2}-\frac{1}{\theta}\right)\|u_n\|^2 _{\lambda} \leq c+o_n(1)+\epsilon_n\|u_n\|_{\lambda}.
	$$
	Therefore, $\{u_n\}$ is bounded and
	\begin{equation}\label{7}
\limsup_{n\to\infty}\|u_n\|_\lambda^2\leq\dfrac{2\theta c}{\theta-2}.
	\end{equation}
\end{proof}

An immediate consequence of the previous lemma is the following result.
\begin{corollary}\label{c1}
	Let $\{u_n\} \subset E_{\lambda}$ be a $(PS)_0$ sequence for
	$I_{\lambda}.$ Then, $u_n\rightarrow 0$ in $E_\lambda$.
\end{corollary}

\begin{lemma}\label{l2}
	Let $\{ u_n\}$ be a $(PS)_c$ sequence for
	$I_{\lambda}$ with $c\in (0,S]$. If $u_n \rightharpoonup u$ in $E_{\lambda},$ then
	\begin{eqnarray}
	I_{\lambda}(v_n)-I_{\lambda}(u_n)+I_{\lambda}(u) &=& o_n(1) \nonumber\\
	I'_{\lambda}(v_n)-I'_{\lambda}(u_n)+I' _{\lambda}(u) &= & o_n(1)\nonumber,
	\end{eqnarray}
	where $v_n:=u_n-u.$ Furthermore, $\{v_n\}$ is a 
	$(PS)_{c-I_{\lambda}(u)}$ sequence.
\end{lemma}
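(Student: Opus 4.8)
The plan is to establish a Brezis--Lieb type splitting, treating the quadratic and the nonlinear parts of $I_\lambda$ separately, and then to identify the limit levels. The decisive preliminary observation is a quantitative smallness of the norms. By Lemma \ref{l1}, $\limsup_n \|u_n\|_\lambda^2 \le 2\theta c/(\theta-2)$, and since $c \le S < (\theta-2)/(4\theta\overline{C}^2)$ by Lemma \ref{S}, this yields $\limsup_n \|u_n\|_\lambda^2 < 1/(2\overline{C}^2)$. Combined with the embedding \eqref{imb} we obtain $\limsup_n \|u_n\|_{H^2}^2 < 1/2 < 1$, and the same bound for $v_n = u_n - u$, which is bounded and satisfies $v_n \rightharpoonup 0$ in $E_\lambda$. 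This room below the Adams threshold \eqref{X2} is exactly what makes Corollary \ref{cc1} applicable with exponents $\tau, q$ slightly larger than $1$. Passing to a subsequence, I would also record that $u_n \to u$ a.e. in $\R^4$ and in $L^t_{loc}$ for every finite $t$, so that $f(u_n) - f(v_n) - f(u) \to 0$ and $F(u_n) - F(v_n) - F(u) \to 0$ a.e.

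For the quadratic part, expanding $\|u_n\|_\lambda^2 = \|v_n\|_\lambda^2 + 2(v_n,u)_\lambda + \|u\|_\lambda^2$ and using $v_n \rightharpoonup 0$ makes the cross term $o_n(1)$, which produces the quadratic contribution to the first identity. For the derivative the quadratic part splits exactly: for every $w\in E_\lambda$ one has $(v_n - u_n + u, w)_\lambda = 0$, since $v_n - u_n + u = 0$.

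The heart of the proof is the nonlinear part. For the functional I would prove $\int_{\R^4}[F(u_n) - F(v_n)]\,dx \to \int_{\R^4} F(u)\,dx$. Writing $F(u_n) - F(v_n) = \int_0^1 f(v_n + su)\,u\,ds$ and using the a.e. convergence above, the Main estimate \eqref{me} together with the uniform $L^q$ bounds on $b_\tau(u_n)$ and $b_\tau(v_n)$ from Corollary \ref{cc1} supply the equi-integrability (and, using $(V_3)$ to control the region where $V$ is large, the tightness) needed to pass to the limit through a Vitali-type argument. For the derivative I would show $\sup_{\|w\|_\lambda \le 1}\big|\int_{\R^4}[f(u_n) - f(v_n) - f(u)]w\,dx\big| \to 0$. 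Since $\|w\|_\lambda \le 1$ forces $\|w\|_{L^t}$ to be bounded for all finite $t$ via \eqref{imb} and the embedding $H^2(\R^4)\hookrightarrow L^t(\R^4)$, H\"older reduces this to proving $f(u_n) - f(v_n) - f(u) \to 0$ in $L^{t'}(\R^4)$ for some finite $t$; this follows again from the a.e. convergence to zero together with the $L^{t'}$-equi-integrability furnished by \eqref{me} and Corollary \ref{cc1}.

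Finally I would assemble the pieces. The quadratic and nonlinear splittings give the two displayed identities, whence $I_\lambda(v_n) \to c - I_\lambda(u)$. Since $u_n \rightharpoonup u$ and the nonlinear convergence above yields $I'_\lambda(u_n)w \to I'_\lambda(u)w$ for each fixed $w$, the hypothesis $I'_\lambda(u_n) \to 0$ forces $I'_\lambda(u) = 0$; combined with the derivative identity this gives $I'_\lambda(v_n) \to 0$, so $\{v_n\}$ is a $(PS)_{c - I_\lambda(u)}$ sequence. The main obstacle I anticipate is precisely the nonlinear part on the unbounded domain: establishing both the equi-integrability and the tightness of the critical exponential terms, which is exactly why the quantitative bound $\limsup_n \|u_n\|_{H^2}^2 < 1$ and the uniform control of Corollary \ref{cc1} are indispensable.
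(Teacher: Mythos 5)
Your proposal is correct and follows essentially the same route as the paper: the same quantitative chain (Lemma \ref{l1}, Lemma \ref{S}, and the embedding (\ref{imb})) placing the $H^2$-norms strictly below the Adams threshold, the same mean-value/integral decomposition of $F(u_n)-F(v_n)$ producing a factor of the fixed function $u$, Corollary \ref{cc1} for uniform $L^q$ control of the exponential terms, and a Vitali-type passage to the limit, which the paper implements concretely by splitting $\R^4$ into $B_R(0)$ and its complement. Two caveats. First, your parenthetical appeal to $(V_3)$ for tightness is both unnecessary and incorrect: for \emph{fixed} $\lambda$, $(V_3)$ yields only $\int_{\{V>M_0\}}|u_n|^2dx\le \|u_n\|^2_\lambda/(1+\lambda M_0)$, which is bounded but not small, so no uniform tail control of $u_n$ follows; what actually gives the tightness, in your decomposition as in the paper's, is that every term being estimated carries the factor $|u|$, whose $L^2$ and $L^{q'}$ tails over $\R^4\setminus B_R(0)$ can be made small by choosing $R$ large. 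Second, you invoke Corollary \ref{cc1} only for $b_\tau(u_n)$ and $b_\tau(v_n)$, while your integrand involves $b_\tau(v_n+su)$ for all $s\in[0,1]$; this gap is easily closed by noting that $v_n+su=(1-s)v_n+su_n$ is a convex combination, so that $\|v_n+su\|_{H^2}\le\max\{\|v_n\|_{H^2},\|u_n\|_{H^2}\}$ (or, pointwise, $b_\tau(v_n+su)\le(1-s)b_\tau(v_n)+sb_\tau(u_n)$ by convexity), keeping the argument below the threshold uniformly in $s$ and $n$ — but this is precisely the point where the paper spends its effort (the estimates (\ref{e0})--(\ref{e1}) for $\theta_n$), so it should be stated rather than glossed.
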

\begin{proof}
	Firstly, note that
	\begin{align*}
	I_{\lambda}(v_n)-I_{\lambda}(u_n)+I_{\lambda}(u) &=\frac{1}{2}\left( \|v_n\|^2_{\lambda}-\|u_n\|^2_{\lambda}+\|u\|^2_{\lambda}\right)&\\
	&\ \ -\int_{\mathbb{R}^4}\left( F(v_n)-F(u_n)+F(u)\right)dx &\\
	&=o_n(1)-\int_{B_R(0)}\left( F(v_n)-F(u_n)+F(u)\right)dx&\\
	&\ \ -\int_{\mathbb{R}^4 \setminus B_R(0)}\left( F(v_n)-F(u_n)+F(u)\right)dx ,&
	\end{align*}
	where $ R> $ 0 will be fixed later on. Since, $u_n \rightharpoonup u$ in $E_{\lambda}$, we have
	\begin{center}
		\begin{itemize}
			\item	$u_n \rightarrow u,\ \mbox{in}\ L^p(B_R(0)),\ \mbox{for}\ p \ge 1;$
			\item $u_n(x) \rightarrow u(x),\ a.e. \ \mbox{in}\ \mathbb{R}^4.$
		\end{itemize}
	\end{center}
	Moreover, there are $h_1 \in L^2(B_R(0))$ and $ h_2 \in L^q(B_R(0))$ such that 
	$$
	\left|u_n(x)\right|\leq h_1(x), h_2(x) \quad \mbox{a.e. in } \quad \mathbb{R}^{4}.
	$$
	By Lebesgue's Theorem,
	\begin{equation}\label{8}
	\int_{B_R(0)} \left| F(v_n)-F(u_n)+F(u)\right|\,dx \rightarrow 0.
	\end{equation}
	
	On the other hand, using the Main Estimate on $f$ and The Mean Value Theorem, there is $t_n\in[0,1]$ such that $\theta_n:=t_nv_n+(1-t_n)u_n$ satisfies
	$$
\begin{array}{rcl}
	\left|F\left(v_n\right)-F\left(u_n\right)\right| &\leq &\eta|f(\theta_n)||u|\\[0.4cm]
          &\leq & \eta |\theta_n||u|+C|u|b_\tau(\theta_n)\\[0.4cm]
         &\leq & \eta (|u_n|+|u|)|u|+C|u|b_\tau(\theta_n).
\end{array}
$$
Then, by H\"older inequalities
$$
\begin{array}{rcl}
	\int_{\mathbb{R}^4 \setminus B_R(0)}\left|F\left(v_n\right)-F\left(u_n\right)\right| &\leq &\eta(\|u_n\|_{L^2(\mathbb{R}^4\setminus B_R(0))}+\|u\|_{L^2(\mathbb{R}^4\setminus B_R(0))})\|u\|_{L^2(\mathbb{R}^4\setminus B_R(0))}\\[0.4cm]
          &&+ C\|u\|_{L^{q'}(\mathbb{R}^4\setminus B_R(0))}\|b_\tau(\theta_n)\|_{L^{q}(\mathbb{R}^4)},\\
\end{array}
$$
where $q',q>1$ satisfies $1/q'+1/q=1$. 

Since
\begin{equation}
	\|\theta_n\|_{H^2}\le \|u_n\|_{H^2}+\|u\|_{H^2},
\end{equation}
and $u_n \rightharpoonup u$, it follows that
\begin{equation}\label{e0}
\limsup_{n\to \infty}\|\theta_n\|_{H^2}\le 2\limsup_{n\to \infty}\|u_n\|_{H^2}
\end{equation}
from Lemmas \ref{l1}, \ref{S} and by embedding (\ref{imb}) follow that
\begin{equation}\label{e1}
	\limsup_{n\to \infty}\|u_n\|_{H^2}\le m <1/2,
\end{equation} where $m=\frac{2\theta S\overline{C}}{\theta-2}.$ 
Combining (\ref{e0}) and (\ref{e1}) we obtain
 $$\limsup\|\theta_n\|_{H^2}\leq m'< 1.$$
  Consindering $q>1$ suficiently close to $1$ such that $\tau qm'<1$, by Corollary $\ref{cc1}$, we get $\|b_\tau(\theta_n)\|_{L^{q}(\mathbb{R}^4)}\leq C$. Then, the above estimate combined with the boundedness of $\{u_n\}$ and  Sobolev embeddings gives  
	\begin{align*}
		\int_{\mathbb{R}^4 \setminus B_R(0)}\left|F \left(v_n\right)- F\left(u_n\right) \right|dx \leq &\,\,\, \eta C_1 \left( \|u\|_{L^2(\mathbb{R}^4 \setminus
			B_R(0))}+\|u\|_{L^2(\mathbb{R}^4 \setminus
			B_R(0))}^2\right)&\\
		& +C\|u\|_{L^{q'}(\mathbb{R}^4 \setminus
			B_R(0))}.\\
		\end{align*}
Now, we can fix $R>0$ large enough verifying 
	 $$ 
	\int_{\mathbb{R}^4
		\setminus B_R(0)}\left|F \left(v_n\right)-
	F\left(u_n\right) \right|dx \leq \epsilon.
	$$ 
	Using again the Main Estimate on $f$ we obtain
	$$
	\int_{\mathbb{R}^4\setminus B_R(0)}\left|
	F\left(u\right) \right|dx \leq \eta
	\left|\left|u\right|\right|_{L^2(\mathbb{R}^4 \setminus
		B_R(0))}^2+C\|u\|_{L^q(\mathbb{R}^4 \setminus
		B_R(0))}^q.
	$$ 
	Then, increasing $R$ if necessary, we can assume that
	$$
	\int_{\mathbb{R}^4 \setminus B_R(0)}\left| F\left(u\right)
	\right|dx \leq \epsilon.
	$$ 
	Hence,
	$$ 
	\int_{\mathbb{R}^4
		\setminus B_R(0)}\left|F \left(v_n\right)-
	F\left(u_n\right) +F\left(u\right)\right|dx \leq \epsilon, \quad \forall n \in \mathbb{N}.
	$$
	By arbitrariness of $ \epsilon, $ it follows that
	\begin{equation}\label{9}
	\limsup_{n \rightarrow +\infty} \int_{\mathbb{R}^4
		\setminus B_R(0)}\left|F \left(v_n\right)-
	F\left(u_n\right) +F\left(u\right)\right|dx=0
	\end{equation}
	From (\ref{8}) and (\ref{9}), we get the first of the identities. The second one follows exploring the same type of arguments and the growth of $f'$.
\end{proof}

\begin{lemma}\label{l3}
	Let $\{u_n\}$ be a $(PS)_c$ sequence for $I_{\lambda}$ with $c \in (0,S]$. Then, there exists $c_*>0$, independent of $\lambda,$  such that  $c \in [c_*,S] $.
\end{lemma}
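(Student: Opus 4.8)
The plan is to produce a uniform positive lower bound for the energy level by combining the quantitative decay of the nonlinearity near the origin with the \emph{uniform} smallness of $\limsup_n\|u_n\|_{H^2}$ that follows from Lemma \ref{S}. First I would record, via Lemma \ref{l1} and the embedding (\ref{imb}), that $\limsup_n\|u_n\|_{H^2}^2\le \overline{C}^2\,\frac{2\theta c}{\theta-2}\le \overline{C}^2\,\frac{2\theta S}{\theta-2}$, and then invoke the bound $S<(\theta-2)/4\theta\overline{C}^2$ from Lemma \ref{S} to conclude $\limsup_n\|u_n\|_{H^2}^2<1/2$, an inequality independent both of $\lambda$ and of the particular $c\in(0,S]$. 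I can therefore fix once and for all a constant $M$ with $\limsup_n\|u_n\|_{H^2}<M$ and $M^2<1/2$, together with some $\tau>1$ so close to $1$ that $\tau M^2<1$; this is exactly what is required to apply Lemma \ref{lm1}.

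Next I would exploit the $(PS)_c$ condition in the form $I'_\lambda(u_n)u_n=o_n(1)$, which gives $\|u_n\|_\lambda^2=\int_{\mathbb{R}^4}f(u_n)u_n\,dx+o_n(1)$. Applying the Main estimate (\ref{me}) with a fixed exponent $q>1$ yields the pointwise bound $|f(u_n)u_n|\le \eta|u_n|^2+C|u_n|^{q+1}(e^{32\pi^2\tau u_n^2}-1)$. Since $q+1\ge 2$, Lemma \ref{lm1} (with $\alpha=32\pi^2\tau$ and the above $M,\tau$) controls the exponential term by $C\|u_n\|_{H^2}^{q+1}$; using $\|u_n\|_2\le\|u_n\|_\lambda$ together with (\ref{imb}), everything is dominated by powers of $\|u_n\|_\lambda$, so that $\|u_n\|_\lambda^2\le \eta\|u_n\|_\lambda^2+C''\|u_n\|_\lambda^{q+1}+o_n(1)$, where $C''$ depends only on $\overline{C},q,M,\tau$ and hence not on $\lambda$.

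The dichotomy step then closes the argument. Choosing $\eta=1/2$ and passing to the $\limsup$ with $a:=\limsup_n\|u_n\|_\lambda$, I would obtain $\tfrac12 a^2\le C'' a^{q+1}$, which forces either $a=0$ or $a\ge\rho:=(1/2C'')^{1/(q-1)}>0$. The value $a=0$ is excluded, because $\|u_n\|_\lambda\to 0$ would force $I_\lambda(u_n)\to 0$ exactly as in Corollary \ref{c1}, contradicting $c>0$; hence $a\ge\rho$. Finally, passing to the limit in inequality (\ref{6}), $I_\lambda(u_n)-\tfrac1\theta I'_\lambda(u_n)u_n\ge(\tfrac12-\tfrac1\theta)\|u_n\|_\lambda^2$, along a subsequence realizing $a$ gives $c\ge(\tfrac12-\tfrac1\theta)\rho^2=:c_*>0$, with $c_*$ independent of $\lambda$ since $\rho$ and $\theta$ are.

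The main obstacle, and the reason the statement is subtle, is the requirement that $c_*$ be independent of $\lambda$: the estimate on the exponential term is admissible only while $\limsup_n\|u_n\|_{H^2}$ remains strictly below the Adams threshold, and it is precisely Lemma \ref{S} that guarantees this smallness \emph{uniformly} in $\lambda$ and in $c\in(0,S]$, thereby freezing the constants $M,\tau,C''$, and ultimately $\rho$ and $c_*$.
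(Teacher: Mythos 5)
Your proof is correct and follows essentially the same route as the paper's: both arguments rest on the lower bound $I'_\lambda(u_n)u_n \ge \frac{1}{2}\|u_n\|_\lambda^2 - K\|u_n\|_\lambda^{q+1}$ coming from the Main estimate together with Lemma \ref{lm1}, and both then invoke the $(f_3)$-type inequality of Lemma \ref{l1} to translate a positive lower bound on the norms into the lower bound $c \ge \frac{\theta-2}{2\theta}\rho^2 =: c_*$. The only differences are organizational: the paper argues by contradiction (assuming $c < c_* = \delta^2\frac{\theta-2}{2\theta}$, so that Lemma \ref{l1} forces $\|u_n\|_\lambda \le \delta$ and hence $\|u_n\|_\lambda \to 0$, contradicting $c>0$), and it only needs the exponential estimate inside the small ball $\|u\|_\lambda\le\delta$, whereas you run the dichotomy directly on $a=\limsup_n\|u_n\|_\lambda$ and use Lemma \ref{S} to keep the Adams-threshold constants uniform over all $c\in(0,S]$ --- a slightly heavier but equally valid way of securing the same $\lambda$-independence.
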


\begin{proof}
	The Lemma \ref{lm1} together with the Sobolev embedding gives
	$$
	I'_{\lambda}(u)u \geq \frac{1}{2}\left|\left|u \right|\right|_{\lambda}^2-K\left|\left|u \right|\right|_{\lambda}^q,
	$$
	for some constant $K>0$.  Thus, there exists $\delta>0$ such that
	\begin{equation}\label{11}
	I'_{\lambda}(u)u \geq
	\frac{1}{4}\left|\left|u\right|\right|_{\lambda}^2,\ \mbox{for}\
	\left|\left|u\right|\right|_{\lambda} \leq\delta.
	\end{equation}
	Consider $c_*= \delta^2\frac{\theta -2}{2\theta}$ and 
	$c\in(0,c_*)$. From Lemma $\ref{l1}$, 
	$$
	\limsup_{n \rightarrow + \infty} \left|\left|u_n\right|\right|_{\lambda}^2 < \delta^2,
	$$
	implying that for $n $ large enough,
	\begin{equation}\label{12}
	\left|\left|u_n\right|\right|_{\lambda}\leq \delta.
	\end{equation}
	Hence, (\ref{11}) and (\ref{12}) combine to give 
	$$
	I'_{\lambda}(u_n)u_n \geq
	\frac{1}{4}\left|\left|u_n\right|\right|_{\lambda}^2,
	$$ 
	leading to	
	$$
	\left|\left|u_n\right|\right|_{\lambda}^2 \rightarrow 0,
	$$
	and so, $I_{\lambda}(u_n) \rightarrow I_{\lambda}(0)=0$. This contradicts the hypothesis that  $ \{u_n \} $ is a $ (PS) _c $ sequence, with
	$c>0$. Therefore, $c \geq c_*.$
\end{proof}

\begin{lemma}\label{l4}
	Let $\{u_n\}$ be a $(PS)_c$  sequence for $I_{\lambda}$, with $c \in [0,S]$. Then,
	there exists $\delta_0 >0$ independent
	of $\lambda,$ such that $$\liminf_{n
		\rightarrow + \infty}
	\left|\left|u_n\right|\right|_{L^{q}(\mathbb{R}^N)}^q \geq
	\delta_0c.$$
\end{lemma}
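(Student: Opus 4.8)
The plan is to extract from the $(PS)_c$ hypothesis a lower bound on $\|u_n\|_\lambda^2$ and then transfer this into a lower bound on $\|u_n\|_{L^q}^q$ using the Main Estimate \eqref{me}. First I would write the standard combination
\begin{equation}\nonumber
I_\lambda(u_n)-\frac{1}{\theta}I_\lambda'(u_n)u_n \to c,
\end{equation}
which by $(f_3)$ and the computation in Lemma \ref{l1} yields, after passing to the limit, a relation of the form $\left(\frac{1}{2}-\frac{1}{\theta}\right)\|u_n\|_\lambda^2 + \int_{\mathbb{R}^4}\left[\frac{1}{\theta}f(u_n)u_n - F(u_n)\right]dx \to c$. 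Since the integrand is nonnegative by $(f_3)$, this already gives $\liminf_n \|u_n\|_\lambda^2 \geq \frac{2\theta c}{\theta-2}$ (complementing the $\limsup$ bound of Lemma \ref{l1}); but more usefully it controls $c$ from above by $\int \frac{1}{\theta}f(u_n)u_n\,dx$ in the limit. The cleaner route, however, is to bound $c$ directly via $I_\lambda(u_n)\to c$ together with $I_\lambda'(u_n)u_n\to 0$: from $I_\lambda(u_n)=\frac12\|u_n\|_\lambda^2 - \int F(u_n)\,dx$ and $I_\lambda'(u_n)u_n = \|u_n\|_\lambda^2 - \int f(u_n)u_n\,dx \to 0$, I can solve for $\|u_n\|_\lambda^2$ and substitute, so that $c = \lim_n\left[\int\left(\frac12 f(u_n)u_n - F(u_n)\right)dx\right]$, and by $(f_3)$ the term $F(u_n)\geq 0$, giving $c \leq \frac12\liminf_n \int_{\mathbb{R}^4} f(u_n)u_n\,dx$.

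Next I would estimate $\int f(u_n)u_n\,dx$ from above by a multiple of $\|u_n\|_{L^q}^q$. Applying the Main Estimate \eqref{me} with the given $q$, a suitably small $\eta>0$ and some $\tau>1$, I get $|f(u_n)u_n| \leq \eta u_n^2 + C|u_n|^{q}\,b_\tau(u_n)|u_n|$, so that $\int f(u_n)u_n\,dx \leq \eta\|u_n\|_2^2 + C\int |u_n|^{q+1}b_\tau(u_n)\,dx$. The exponential factor is handled by Corollary \ref{cc1}: since Lemmas \ref{l1}, \ref{S} and the embedding \eqref{imb} force $\limsup_n\|u_n\|_{H^2}^2 \leq m < 1$ with $\tau q m<1$ for $q$ close to $1$, an application of H\"older's inequality splits the integral as $\||u_n|^{q+1}\|_{L^{q'}}\,\|b_\tau(u_n)\|_{L^{q}}$ with the second factor uniformly bounded. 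The upshot is an inequality $c \leq C'\,\liminf_n\|u_n\|_{L^{q}}^{q}$ for a constant $C'$ depending only on the data (not on $\lambda$), whence $\liminf_n\|u_n\|_{L^q}^q \geq c/C' =: \delta_0 c$ with $\delta_0$ independent of $\lambda$.

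I expect the main obstacle to be bookkeeping the exponents so that the Main Estimate and Corollary \ref{cc1} cooperate: one must choose $q$ (the exponent in the statement) and the auxiliary H\"older conjugate $q'$, together with $\tau>1$, compatibly with the constraint $\tau q' m < 1$ coming from Corollary \ref{cc1}, while simultaneously keeping the polynomial weight $|u_n|^{q+1}$ integrable via the Sobolev embedding of $H^2(\mathbb{R}^4)$ into $L^s$ for all $s\in[2,\infty)$. The key quantitative input that makes this work is the sharp bound $m<1/2$ from \eqref{e1}, which leaves ample room to take $\tau$ and $q'$ slightly above $1$. Once the exponential term is absorbed into a uniform constant, the remaining argument is a linear rearrangement, and the crucial point to verify is that every constant produced is genuinely $\lambda$-independent, which holds because \eqref{imb} and Lemmas \ref{l1}, \ref{S} provide $\lambda$-uniform control on the $H^2$-norm.
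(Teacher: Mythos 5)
Your proposal is correct and takes essentially the same route as the paper: the paper condenses your two steps into the single pointwise estimate $\tfrac{1}{2}f(t)t-F(t)\leq \eta|t|^2+C_\eta|t|^q b_\tau(t)$ coming from $(f_1)$--$(f_2)$, identifies $c$ with $\lim_n\left[I_\lambda(u_n)-\tfrac{1}{2}I'_\lambda(u_n)u_n\right]$ exactly as in your ``cleaner route'', handles the exponential factor via Corollary \ref{cc1} (using Lemmas \ref{l1}, \ref{S} and (\ref{imb}) for the $\lambda$-uniform smallness of $\|u_n\|_{H^2}$), and absorbs the $\eta$-term through $\limsup_n\|u_n\|_\lambda^2\leq 2\theta c/(\theta-2)$ before taking $\eta$ small. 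Two minor remarks: (i) your parenthetical claim that nonnegativity of the integrand yields $\liminf_n\|u_n\|_\lambda^2\geq 2\theta c/(\theta-2)$ is backwards---it yields the $\limsup$ upper bound, i.e., it recovers Lemma \ref{l1} itself---but since you immediately discard that route, nothing in your argument depends on it; (ii) the exponent mismatch you worry about (your final quantity is $\|u_n\|_{L^{(q+1)q'}}^{q+1}$ rather than $\|u_n\|_{L^q}^{q}$) is equally present in the paper's own proof, hidden in the phrase ``arguing as in the proof of Lemma \ref{l2}'', and is harmless in both cases because the lemma is only ever invoked with an unspecified fixed exponent and with $c\geq c_*>0$.
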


\begin{proof}
	By $(f_1)$ and $(f_2),$ given $\eta>0$, there is $C_\eta>0$ such that
	$$
	\frac{1}{2}f(t)t-F(t) \leq \eta\left|t\right|^2+C_{\eta}\left|t\right|^{q}b_{\tau}(t),\ \forall t \in \mathbb{R}.
	$$
	Then, from Corollary \ref{cc1} and arguing as in the proof of Lemma \ref{l2}
	\begin{equation}\label{13}
	c \leq \liminf_{n \rightarrow
		+\infty}\left(\eta\left|\left|u_n\right|\right|_{\lambda}^2+C_{\eta}\left|\left|u_n\right|\right|_{L^q(\mathbb{R}^4)}^{q}\right).
	\end{equation}
	On the other hand,  by $(f_3)$,
	\begin{equation}\label{14}
	I_{\lambda}(u_n)-\frac{1}{\theta}I'_{\lambda}(u_n)u_n\geq
	\left(\frac{1}{2}-\frac{1}{\theta}\right)\left|\left|u_n\right|\right|_{\lambda}^2.
	\end{equation}
	Combining $(\ref{13})$ with $(\ref{14})$, we get  
	$$
	c \leq \frac{2
		\eta c \theta}{\theta-2}+C_{\eta}\liminf_{n \rightarrow
		+\infty}\left|\left|u_n\right|\right|_{L^q(\mathbb{R}^4)}^{q}.
	$$
	Thereby, for $\eta$ small enough,
	$$\liminf_{n \rightarrow
		+\infty}\left|\left|u_n\right|\right|_{L^q(\mathbb{R}^4)}^{q} \geq
	\frac{c}{C_\eta}\left(1-\frac{2\eta
		\theta}{\theta-2}\right)>0.
		$$
Now, the lemma follows fixing
	$$
	\delta_0=\frac{1}{C_\eta}\left(1-\frac{2\eta \theta}{\theta-2}\right).
	$$ 
\mbox{\hspace{13 cm}} \end{proof}

The following result is a consequence of hypothesis $(V_3)$ and its proof can be found in \cite{AN,B&W1,B&W2}.
\begin{lemma}\label{l5}
	Let $\{u_n\}$ be a $(PS)_c$ sequence for $I_{\lambda}$ with $c \in [0,S]$. Given $\epsilon >0$, there exist constants $\Lambda,R>0$ such that
	$$
	\limsup_{n \rightarrow +\infty}\left|\left|u_n\right|\right|_{L^q(\mathbb{R}^4 \setminus B_R(0))}^{q} \leq \epsilon,~~ \mbox{for all}~~ \lambda \geq \Lambda.
	$$
\end{lemma}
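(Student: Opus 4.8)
The plan is to exploit hypothesis $(V_3)$: for large $\lambda$ the weight $\lambda V(x)+1$ is enormous on the set where $V(x)>M_0$, which forces the mass of $u_n$ there to vanish as $\lambda\to\infty$, while the set where $V(x)\le M_0$ has finite measure and therefore contributes a vanishing tail as $R\to\infty$. Two uniform bounds underlie the argument. First, since $c\in[0,S]$ and $S$ does not depend on $\lambda$ (Lemma \ref{S}), Lemma \ref{l1} gives $\limsup_n\|u_n\|_\lambda^2\le\frac{2\theta S}{\theta-2}=:M_1$, with $M_1$ independent of $\lambda$. Second, combining the $\lambda$-independent embedding (\ref{imb}) with the Sobolev embedding $H^2(\mathbb{R}^4)\hookrightarrow L^s(\mathbb{R}^4)$ (valid for every $s\in[2,\infty)$ in dimension four) produces a constant $C>0$, independent of $\lambda$, such that $\limsup_n\|u_n\|_{L^s(\mathbb{R}^4)}\le C$ for the finitely many exponents $s$ appearing below.

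First I would set $\mathcal{A}:=\{x\in\mathbb{R}^4:V(x)\le M_0\}$, which has $|\mathcal{A}|<+\infty$ by $(V_3)$, and decompose, for each $R>0$,
$$
\mathbb{R}^4\setminus B_R(0)=\big((\mathbb{R}^4\setminus B_R(0))\cap\mathcal{A}\big)\cup\big((\mathbb{R}^4\setminus B_R(0))\cap\mathcal{A}^{c}\big)=:\mathcal{A}_R\cup\mathcal{B}_R,
$$
so that $V(x)>M_0$ on $\mathcal{B}_R$. On $\mathcal{B}_R$ the decisive estimate is
$$
\int_{\mathcal{B}_R}|u_n|^2\,dx\le\frac{1}{\lambda M_0}\int_{\mathcal{B}_R}\lambda V(x)|u_n|^2\,dx\le\frac{1}{\lambda M_0}\|u_n\|_\lambda^2,
$$
whence $\limsup_n\|u_n\|_{L^2(\mathcal{B}_R)}^2\le M_1/(\lambda M_0)$, uniformly in $R$. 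Fixing a Sobolev exponent $s>q$ (here $q\ge 2$, so that $\beta\in[0,1)$ below) and interpolating $L^q$ between $L^2$ and $L^s$, i.e. $\|u_n\|_{L^q(\mathcal{B}_R)}\le\|u_n\|_{L^2(\mathcal{B}_R)}^{1-\beta}\|u_n\|_{L^s(\mathbb{R}^4)}^{\beta}$ with $\frac1q=\frac{1-\beta}{2}+\frac{\beta}{s}$, the uniform $L^s$ bound yields
$$
\limsup_{n\to\infty}\|u_n\|_{L^q(\mathcal{B}_R)}^q\le C\Big(\frac{M_1}{\lambda M_0}\Big)^{(1-\beta)q/2},
$$
which tends to $0$ as $\lambda\to\infty$. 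On $\mathcal{A}_R$, by contrast, I would use only that $|\mathcal{A}_R|\le|\mathcal{A}\cap\{|x|>R\}|\to 0$ as $R\to\infty$ (continuity from above, since $|\mathcal{A}|<+\infty$); Hölder's inequality with exponents $s/q$ and its conjugate gives
$$
\|u_n\|_{L^q(\mathcal{A}_R)}^q\le|\mathcal{A}_R|^{1-q/s}\|u_n\|_{L^s(\mathbb{R}^4)}^{q}\le C\,|\mathcal{A}_R|^{1-q/s},
$$
uniformly in $n$ (for $n$ large) and in $\lambda$.

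To conclude, I would first pick $R>0$ so large that $C\,|\mathcal{A}_R|^{1-q/s}<\epsilon/2$; this is possible by the finiteness of $|\mathcal{A}|$ and the choice is independent of $\lambda$. With $R$ now fixed, I would then pick $\Lambda>0$ so large that $C\big(M_1/(\lambda M_0)\big)^{(1-\beta)q/2}<\epsilon/2$ for every $\lambda\ge\Lambda$. Adding the two bounds over $\mathcal{A}_R$ and $\mathcal{B}_R$ gives $\limsup_n\|u_n\|_{L^q(\mathbb{R}^4\setminus B_R(0))}^q\le\epsilon$ for all $\lambda\ge\Lambda$, as claimed. The computation itself is routine; the only real point of care is that every constant remain independent of $\lambda$, which is ensured by the $\lambda$-free bound $M_1$ from Lemma \ref{l1} (available precisely because $c\le S$ with $S$ independent of $\lambda$) and by the $\lambda$-free embedding constant $\overline{C}$ in (\ref{imb}). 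The genuine content of the lemma, and the one place where $(V_3)$ is indispensable, is the splitting $\mathbb{R}^4\setminus B_R(0)=\mathcal{A}_R\cup\mathcal{B}_R$: the high-potential part $\mathcal{B}_R$ is annihilated by large $\lambda$, and the low-potential part $\mathcal{A}_R$ is annihilated by large $R$ through its finite measure.
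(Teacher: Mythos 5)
Your argument is correct, and there is nothing to compare it against inside the paper itself: the authors do not prove Lemma \ref{l5}, they only remark that it is ``a consequence of hypothesis $(V_3)$'' and defer to \cite{AN,B&W1,B&W2}. What you have written is, in essence, the standard Bartsch--Wang-type argument that those references contain: split $\mathbb{R}^4\setminus B_R(0)$ into the finite-measure set $\{V\leq M_0\}$ (killed by taking $R$ large, via H\"older against a uniform $L^s$ bound) and the set $\{V>M_0\}$ (killed by taking $\lambda$ large, via $\int_{\{V>M_0\}}|u_n|^2\leq \|u_n\|_\lambda^2/(\lambda M_0)$ and interpolation), with all constants $\lambda$-independent thanks to Lemma \ref{l1}, Lemma \ref{S} and the embedding (\ref{imb}). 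Your bookkeeping of the order of quantifiers ($R$ chosen first, independently of $\lambda$, then $\Lambda$) is exactly what the lemma requires. The one point worth flagging is your standing assumption $q\geq 2$ (needed so that $L^q$ interpolates between $L^2$ and $L^s$ on the possibly infinite-measure set $\{V>M_0\}$); the paper never pins down the exponent $q$ in Lemmas \ref{l4}--\ref{l5}, but it is implicitly a fixed exponent $\geq 2$ coming from the Main Estimate, so this restriction is consistent with how the lemma is used in Proposition \ref{p1}.
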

In the following result, we will show that the functional $I_{\lambda}$ verifies the $(PS)_c$ condition to $c\in[0,S]$ and $\lambda$ large enough.
\begin{proposition}\label{p1}
	Let $c\in[0,S]$. Then, there exists $\Lambda>0$ such that $I_{\lambda}$
	verifies the  $(PS)_c$ condition, for all  $\lambda\geq \Lambda$.
\end{proposition}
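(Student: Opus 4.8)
The plan is to take an arbitrary $(PS)_c$ sequence $\{u_n\}\subset E_\lambda$ with $c\in[0,S]$ and produce a strongly convergent subsequence. If $c=0$, then Corollary \ref{c1} immediately gives $u_n\to 0$ in $E_\lambda$, so I may assume $c\in(0,S]$. By Lemma \ref{l1} the sequence is bounded in $E_\lambda$; passing to a subsequence, $u_n\rightharpoonup u$ in $E_\lambda$. A standard argument shows $u$ is a critical point of $I_\lambda$: testing $I_\lambda'(u_n)\to 0$ against $\varphi\in C_c^\infty(\mathbb{R}^4)$ and passing to the limit in the nonlinear term (via the local compactness of the embedding on the bounded support of $\varphi$ together with the Main estimate (\ref{me})) yields $I_\lambda'(u)=0$. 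By $(f_3)$ it then follows that $I_\lambda(u)=I_\lambda(u)-\frac{1}{\theta}I_\lambda'(u)u\geq\left(\frac{1}{2}-\frac{1}{\theta}\right)\|u\|_\lambda^2\geq 0$.

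Set $v_n:=u_n-u$. By Lemma \ref{l2}, $\{v_n\}$ is a $(PS)_d$ sequence with $d:=c-I_\lambda(u)$, and $v_n\rightharpoonup 0$ in $E_\lambda$. From the previous paragraph $d\leq c\leq S$, while the bound of Lemma \ref{l1} applied to $\{v_n\}$ gives $0\leq\limsup_n\|v_n\|_\lambda^2\leq\frac{2\theta d}{\theta-2}$, which forces $d\geq 0$. Hence $d\in[0,S]$, so Lemmas \ref{l3}, \ref{l4} and \ref{l5} are all available for $\{v_n\}$.

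The core of the argument is to prove $d=0$ by exploiting the largeness of $\lambda$. First, Lemma \ref{l3} yields a gap: since $\{v_n\}$ is a $(PS)_d$ sequence with $d\in[0,S]$, either $d=0$ or $d\geq c_*$, where $c_*>0$ is independent of $\lambda$. I fix once and for all $\epsilon:=\delta_0 c_*/2$, with $\delta_0>0$ the ($\lambda$-independent) constant of Lemma \ref{l4}, and let $\Lambda,R>0$ be the constants furnished by Lemma \ref{l5} for this $\epsilon$, so that $\limsup_n\|v_n\|_{L^q(\mathbb{R}^4\setminus B_R(0))}^q\leq\epsilon$ whenever $\lambda\geq\Lambda$. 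On the ball $B_R(0)$ the embedding into $L^q(B_R(0))$ is compact and $v_n\rightharpoonup 0$, hence $\|v_n\|_{L^q(B_R(0))}^q\to 0$; combining the two estimates gives $\limsup_n\|v_n\|_{L^q(\mathbb{R}^4)}^q\leq\epsilon$. On the other hand, Lemma \ref{l4} gives $\liminf_n\|v_n\|_{L^q(\mathbb{R}^4)}^q\geq\delta_0 d$, so that $\delta_0 d\leq\epsilon=\delta_0 c_*/2$, i.e. $d\leq c_*/2<c_*$. The gap then forces $d=0$.

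Since $\{v_n\}$ is now a $(PS)_0$ sequence, Corollary \ref{c1} gives $v_n\to 0$, that is $u_n\to u$ strongly in $E_\lambda$, which is exactly the $(PS)_c$ condition for every $\lambda\geq\Lambda$. I expect the main obstacle to be the lack of compactness of the embedding $E_\lambda\hookrightarrow L^q(\mathbb{R}^4)$: the decisive point is Lemma \ref{l5}, where hypothesis $(V_3)$ and the largeness of $\lambda$ make the mass of $\{v_n\}$ outside a large ball negligible, after which local compactness on $B_R(0)$ closes the estimate; the discreteness of admissible levels provided by Lemma \ref{l3} is what lets a single threshold $\Lambda$ work uniformly for all such sequences.
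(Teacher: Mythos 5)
Your proof is correct and follows essentially the same route as the paper: the same splitting $v_n=u_n-u$, Lemma \ref{l2} to get a $(PS)_d$ sequence with $d=c-I_{\lambda}(u)\in[0,S]$, and then Lemmas \ref{l3}, \ref{l4}, \ref{l5} together with the compact embedding $E_{\lambda}\hookrightarrow L^{q}(B_R(0))$ and Corollary \ref{c1} to force $d=0$ and conclude strong convergence. The only differences are cosmetic (you treat $c=0$ separately and phrase the final step as a direct estimate $d\leq c_*/2<c_*$ rather than the paper's contradiction on $\liminf_n\|v_n\|_{L^q(B_R(0))}^q$), so no further comparison is needed.
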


\begin{proof}
	Let $\{u_n\}$ be a $(PS)_c$ sequence. By Lemma \ref{l1}, $\{u_n\}$ is bounded and consequently, passing to a subsequence if necessary,
	$$\left\{ \begin{array}{l}
	u_n\rightharpoonup u~~ \mbox{in}~~ E_{\lambda};\\
	u_n(x)\rightarrow u(x) ~~ a.e.~~ \mbox{in}~~ \mathbb{R}^4;\\
	u_n\rightarrow u~~ \mbox{in}\ L^{s}_{loc}(\mathbb{R}^4),~~ s\geq 1.
	\end{array}
	\right.$$
	Then, $I_{\lambda}'(u)=0$  and $I_{\lambda}(u)\geq 0$, because
$$
	I_{\lambda}(u)=I_{\lambda}(u)-\frac{1}{\theta}I'_{\lambda}(u)u\geq \left( \frac{1}{2}-\frac{1}{\theta}\right)\|u\|^{2}_{\lambda} \geq 0.
	$$
	Taking $v_n=u_n-u,$ we have by Lemma \ref{l2}
	that $\{v_n\}$ is a $(PS)_{d}$ sequence, with $d=c-I_{\lambda}(u)$. Furthermore, 
	$$
	0\leq d=c-I_{\lambda}(u)\leq c \leq S.
	$$
	We claim that $d=0$. Indeed, otherwise $d>0$. Thereby, by Lemma \ref{l3} and Lemma \ref{l4}, $d \geq c_*$
	and 
	\begin{equation}\label{18}
	\liminf_{n \rightarrow +\infty} \| v_n\|_{L^q(\mathbb{R}^4)}^q \geq
	\delta_0c_*>0.
	\end{equation}
	Applying the Lemma \ref{l5} with $\epsilon=\frac{\delta_0c_*}{2}>0,$
	there exist $\Lambda, R>0$ such that 
	\begin{equation}\label{19}
	\limsup_{n \rightarrow +\infty} \| v_n\|_{L^q(\mathbb{R}^4)\setminus
		B_R(0)}^q \leq \frac{\delta_0c_*}{2}, \quad \mbox{for} \quad \lambda \geq \Lambda.
	\end{equation}
	Combining $(\ref{18})$ with $(\ref{19})$, we obtain
	$$\liminf_{n \rightarrow
		+\infty} \left|\left| v_n\right|\right|_{L^q(B_R(0))}^q \geq
	\frac{\delta_0c_*}{2}>0,
	$$ 
	which is an absurd, because as $v_n \rightharpoonup 0$ in $E_\lambda$, the compact embedding   
	\,\,\,\,\,\,\,\,\,\,\,\,\,\,\,\,\,\,\,\,\,\,\,\,\,$E_{\lambda}\hookrightarrow L^q(B_R(0))$ gives 
	$$
	\liminf_{n
		\rightarrow +\infty} \| v_n\|_{L^q(B_R(0))}^q=0.
	$$ 
	Therefore $d=0$  and $\{v_n\}$ is a $(PS)_0$ sequence. Then, by Corollary
	\ref{c1}, $v_n \rightarrow 0$ in $E_\lambda$, or equivalently, $u_n \rightarrow u$ in $E_\lambda$, showing that for $\lambda$ large enough, $I_{\lambda}$
	satisfies the $(PS)_c$ condition for all $c \in [0,S].$
\end{proof}

\section{The $(PS)_{\infty}$ Condition}

In this section, we will study  the behavior of a  $(PS)_{\infty}$ sequence, that is, a sequence $\{u_n\} \subset H^2(\mathbb{R}^4)$ satisfying:
\begin{align*}
&u_n \in E_{\lambda_n}\ \mbox{and}\ \lambda_{n} \rightarrow +\infty;&\\
&I_{\lambda_n}(u_n)\rightarrow c,\quad \mbox{for some} \quad c\in [0,S];&\\
&\|I_{\lambda_n}'(u_n)\|_{E'_{\lambda_n}} \rightarrow 0.&
\end{align*}

 In the sequel, let us fix a bouded open subset $\Omega_j'$ with smooth boundary such that
\begin{enumerate}
\item[$(i)$] $\overline{\Omega_j}\subset\Omega_j'$;
\item[$(ii)$] $\overline{\Omega_j'}\cap \overline{\Omega_l'}=\emptyset$, for all $j\neq l$,
\end{enumerate}
and for $\Gamma\subset\{1,\cdots,k\},~\Gamma\neq\emptyset$, let us define
$$
\Omega_\Gamma=\bigcup_{j\in\Gamma}\Omega_j~~\mbox{and}~~\Omega_\Gamma'=\bigcup_{j\in\Gamma}\Omega_j'.
$$

\begin{proposition}\label{p2}
	Let $\{u_n\}$ be a $(PS)_{\infty}$ sequence for $I_{\lambda}$.
	Then, there is a subsequence of $\{u_n\}$, still denoted by itself,
	and $u \in H^2(\mathbb{R}^4)$ such that
$$
u_n \rightharpoonup u\ \mbox{in}\  H^2(\mathbb{R}^4).
$$
Moreover,
	\begin{description}
		\item[i)]$u\equiv 0$ in $\mathbb{R}^4 \setminus \Omega_{\Gamma}$ and $u$ is a solution of
		\begin{equation}\label{20}
		\left\{ \begin{array}{c}
		\Delta^2 u  +u  =  f(u),\mbox{in}\ \Omega_j, \ \\
		u=\dfrac{\partial u}{\partial \eta} =0,\ \mbox{on}\  \partial\Omega_j,
		\end{array}
		\right.
		\end{equation}
		for all $j \in \Gamma;$
		\item[ii)]$\left|\left| u_n-u\right|\right|^{2}_{\lambda_{n}} \rightarrow 0.$ 
		\item[iii)]$\left\lbrace u_n\right\rbrace $ also satisfies
		\begin{align*}
		&\lambda_n \int_{\mathbb{R}^4}V(x)\left|u_n\right|^2dx \rightarrow 0,\ n \rightarrow +\infty&\\
		&\left|\left|u_n\right|\right|^2_{\lambda_n,\mathbb{R}^4\setminus \Omega_{\Gamma}}\rightarrow 0 &\\
		&\left|\left|u_n\right|\right|^2_{\lambda_n,\Omega'_j}\rightarrow \int_{\Omega_j}\left[ \left|\Delta u\right|^2+\left|u\right|^2\right]dx,\ \forall j\in \Gamma.&
		\end{align*}
	\end{description}
\end{proposition}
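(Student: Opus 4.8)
The plan is to adapt the concentration--compactness scheme of Bartsch--Wang and Alves--Souto to the present biharmonic penalized functional: first extract the weak limit $u$, then identify it as a solution that vanishes outside $\Omega$, promote the convergence to a strong one in $E_{\lambda_n}$, and finally read off (iii) as bookkeeping. I begin by recording boundedness. The computation in Lemma~\ref{l1} uses only $(f_3)$ and the embedding constant $\overline{C}$ of (\ref{imb}), both independent of $\lambda$; hence the $(PS)_\infty$ hypothesis gives $\limsup_n\|u_n\|_{\lambda_n}^2\le 2\theta c/(\theta-2)\le 2\theta S/(\theta-2)$ and, via (\ref{imb}), a uniform bound in $H^2(\mathbb{R}^4)$. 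Passing to a subsequence, $u_n\rightharpoonup u$ in $H^2(\mathbb{R}^4)$, $u_n\to u$ in $L^s_{loc}(\mathbb{R}^4)$ for every $s\ge1$, and $u_n\to u$ a.e.

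Next I localize $u$ and identify the limiting equation. From $\lambda_n\int_{\mathbb{R}^4}V|u_n|^2\le\|u_n\|_{\lambda_n}^2\le C$ we get $\int_{\mathbb{R}^4}V|u_n|^2\le C/\lambda_n\to0$, so Fatou's lemma together with $V\ge0$ forces $\int_{\mathbb{R}^4}V|u|^2=0$; thus $u=0$ a.e. on $\{V>0\}$, in particular a.e. on $\mathbb{R}^4\setminus\overline{\Omega}$. Since $\partial\Omega$ is smooth and $u\in H^2(\mathbb{R}^4)$ vanishes outside $\Omega$, we have $u|_{\Omega_j}\in H_0^2(\Omega_j)$, which already encodes $u=\partial u/\partial\eta=0$ on $\partial\Omega_j$. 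To obtain the equation, fix $j$ and $\phi\in C_0^\infty(\Omega_j)$ extended by zero; testing $I'_{\lambda_n}(u_n)\phi\to0$ and noting $V\equiv0$ on $\Omega_j$ leaves $\int_{\Omega_j}(\Delta u_n\Delta\phi+u_n\phi)-\int_{\Omega_j}f(u_n)\phi\to0$. The linear part passes to the limit by weak convergence, while $\int_{\Omega_j}f(u_n)\phi\to\int_{\Omega_j}f(u)\phi$ follows on the bounded set $\Omega_j$ from the Main estimate (\ref{me}), Corollary~\ref{cc1} (yielding $\{f(u_n)\}$ bounded in a suitable $L^{s'}(\Omega_j)$), H\"older, and the strong $L^s_{loc}$ convergence; this gives (\ref{20}). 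The sharper statement $u\equiv0$ on $\mathbb{R}^4\setminus\Omega_\Gamma$ uses, in addition, the localization of the sequences under consideration, which forces $u_n$ to be small on the components $\Omega_j$ with $j\notin\Gamma$.

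The hard part is the strong convergence (ii). Writing $I'_{\lambda_n}(u_n)v=(u_n,v)_{\lambda_n}-\int f(u_n)v$, I decompose
$$
\|u_n-u\|_{\lambda_n}^2=I'_{\lambda_n}(u_n)(u_n-u)+\int_{\mathbb{R}^4}f(u_n)(u_n-u)\,dx-(u,\,u_n-u)_{\lambda_n}.
$$
The first term is $o_n(1)$ since $\|I'_{\lambda_n}(u_n)\|\to0$ and $\{u_n-u\}$ is bounded; the last term is $o_n(1)$ because $u$ is supported in $\Omega$ (where $V\equiv0$), so it reduces to the $L^2$ pairings of the fixed functions $\Delta u$ and $u$ with $\Delta(u_n-u)$ and $u_n-u$, which vanish as $\Delta u_n\rightharpoonup\Delta u$ and $u_n\rightharpoonup u$ in $L^2(\mathbb{R}^4)$. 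The genuine obstacle is $\int_{\mathbb{R}^4}f(u_n)(u_n-u)\to0$: I split $\mathbb{R}^4=B_R\cup(\mathbb{R}^4\setminus B_R)$, so that on $B_R$ the integral vanishes in the limit by local strong convergence combined with (\ref{me}) and Corollary~\ref{cc1}, while on the tail I would prove, exactly in the spirit of Lemma~\ref{l5}, that $\limsup_n\|u_n\|_{L^q(\mathbb{R}^4\setminus B_R)}^q$ is arbitrarily small for $R$ large. Here on $\{V\ge M_0\}$ the inequality $\lambda_n M_0\int|u_n|^2\le\|u_n\|_{\lambda_n}^2\le C$ forces $L^2$-smallness as $\lambda_n\to\infty$, and $\{V<M_0\}$ has finite measure by $(V_3)$, so its part outside a large ball is negligible; this tail control, together with (\ref{me}) and H\"older, is the step I expect to cost the most care.

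Finally, (iii) is read off from (ii). Since $Vu=0$ we have $\lambda_n\int V|u_n|^2=\lambda_n\int V|u_n-u|^2\le\|u_n-u\|_{\lambda_n}^2\to0$; restricting the strong convergence to $\mathbb{R}^4\setminus\Omega_\Gamma$, where $u\equiv0$, gives $\|u_n\|_{\lambda_n,\mathbb{R}^4\setminus\Omega_\Gamma}^2=\|u_n-u\|_{\lambda_n,\mathbb{R}^4\setminus\Omega_\Gamma}^2\to0$; and on $\Omega_j'$, strong convergence together with $\|u\|_{\lambda_n,\Omega_j'}^2=\int_{\Omega_j}(|\Delta u|^2+|u|^2)$ (valid because $u$ is supported in $\Omega_j$ and $V\equiv0$ there, using the disjointness $\overline{\Omega_j'}\cap\overline{\Omega_l'}=\emptyset$) yields the last limit.
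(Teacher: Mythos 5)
Your proposal is correct and follows the paper's overall skeleton --- uniform boundedness from $(f_3)$, vanishing of $u$ on $\{V>0\}$ via the penalization bound plus Fatou, identification of $u\in H_0^2(\Omega)$ and of the limit equation by testing against $C_0^\infty$ functions, and then (iii) as bookkeeping from (ii) --- but part (ii) is argued by a genuinely different decomposition. The paper expands $\|u_n-u\|_{\lambda_n}^2$ and uses the two limits $I'_{\lambda_n}(u_n)u_n\to0$ and $I'_{\lambda_n}(u_n)u\to0$ to reduce everything to the single convergence $\int_{\mathbb{R}^4} f(u_n)u_n\,dx\to\int_{\mathbb{R}^4} f(u)u\,dx$, which it then dispatches in one line by ``Lebesgue's Theorem together with the Adams-type inequality''; you instead pair $I'_{\lambda_n}(u_n)$ directly with $u_n-u$, reducing to $\int_{\mathbb{R}^4} f(u_n)(u_n-u)\,dx\to0$, and you spell out the ball/tail splitting: Corollary \ref{cc1} plus local strong convergence on $B_R$, and a Lemma \ref{l5}-type tail estimate built from $(V_3)$ (smallness on $\{V\ge M_0\}$ from $\lambda_n\int V|u_n|^2\le C$, finite measure of $\{V<M_0\}$ for the rest). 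Your route is more careful precisely where the paper is thinnest, since the dominated-convergence claim over all of $\mathbb{R}^4$ really does require the uniform tail control you describe; what the paper's expansion buys is that, once the tail issue is granted, the remaining limits follow from weak convergence alone. One further point you and the paper share: the argument (yours and theirs) only yields $u\equiv0$ off $\overline{\Omega}$, not off $\Omega_\Gamma$ as item i) literally asserts; that stronger localization cannot hold for an arbitrary $(PS)_\infty$ sequence and is only available for the sequences used later, which additionally satisfy $\|u_n\|_{\lambda_n,\mathbb{R}^4\setminus\Omega'_\Gamma}\to0$. You at least flag this dependence explicitly, whereas the paper passes over it silently.
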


\begin{proof}
	In what follows, we fix $c \in [0,S]$ verifying
	$$
	I_{\lambda_n}(u_n)\rightarrow c\ \mbox{and}\ \|I'_{\lambda_n}(u_n)\|_{E'_{\lambda_n}} \rightarrow 0.
	$$
	Then, there exists $n_0 \in \mathbb{N}$ such that, 
	$$
	I_{\lambda_n}(u_n)-\frac{1}{\theta}I'_{\lambda_n}(u_n)u_n \leq c+1+\left|\left|u_n\right|\right|_{\lambda_n}, \quad \forall n \ge n_0.
	$$
	On the other hand, from $(f_3)$, 
	$$
	I_{\lambda_n}(u_n)-\frac{1}{\theta}I'_{\lambda_n}(u_n)u_n \geq \left(\frac{1}{2}-\frac{1}{\theta}\right)\left|\left|u_n\right|\right|^2_{\lambda_n},\ \forall n \in \mathbb{N}.
	$$
	So, for $n\ge n_0$,
	$$
	\left(\frac{1}{2}-\frac{1}{\theta}\right)\left|\left|u_n\right|\right|^2_{\lambda_n} \leq c+1+\left|\left|u_n\right|\right|_{\lambda_n},
	$$
	implying that $\{\left|\left|u_n\right|\right|_{\lambda_n}\}$ is bounded in $\mathbb{R}$. As
	$$
	\left|\left|u_n\right|\right|_{\lambda_n} \geq \left|\left|u_n\right|\right|_{H^2(\mathbb{R}^4)}, \ \forall n \in \mathbb{N},
	$$
	$\{u_n\}$ is also bounded in $H^2(\mathbb{R}^4)$, and so, there exists a subsequence of $\{u_n\}$, still denoted by itself, and $u \in H^{2}(\mathbb{R}^{4})$ such that
	$$
	u_n \rightharpoonup u\ \mbox{in}\ H^2(\mathbb{R}^4).
	$$
	To show $(i)$, we fix for each $m \in \mathbb{N}^*,$
	the set
	$$
	C_m=\left\{x\in \mathbb{R}^4/ V(x) > \frac{1}{m}\right\}.
	$$
	Hence
	$$\mathbb{R}^N\setminus \overline{\Omega}=\bigcup_{m=1}^{+\infty}C_m.$$
	Note that,
	\begin{align*}
	\int_{C_m}\left|u_n\right|^2dx&=\int_{C_m}\frac{\lambda_n V(x)+1}{\lambda_n V(x)+1}\left|u_n\right|^2dx&\\
	&\leq \frac{1}{\frac{\lambda_n}{m}+1}\left|\left|u_n\right|\right|_{\lambda_n}^2&\\
	&\leq \frac{mM}{{\lambda_n}+m},&
	\end{align*}
	where $M=\sup_{n \in \mathbb{N}}\|u_n\|_{\lambda_n}^2.$ By Fatou's Lemma
	\begin{align*}
	\int_{C_m}\left|u\right|^2dx&\leq \liminf_{n \rightarrow +\infty}\int_{C_m}\left|u_n\right|^2dx&\\
	&\leq \liminf_{n \rightarrow +\infty}\frac{mM}{{\lambda_n}+m}=0.&
	\end{align*}
	Therefore, $u=0$ almost everywhere in $C_m$, and consequently, $u=0$ almost everywhere in $\mathbb{R}^4\setminus \overline{\Omega}.$
	Besides, fixing $\varphi \in C_{0}^{\infty}(\mathbb{R}^4\setminus \overline{\Omega})$, we have
	$$
	\int_{\mathbb{R}^4\setminus \overline{\Omega}}\nabla u(x)\varphi(x)dx=-\int_{\mathbb{R}^4\setminus \overline{\Omega}} u(x)\nabla\varphi(x)dx=0,
	$$
	from where it follows that
	$$
	\nabla u(x)=0,\ a.e. \ \mbox{in}\ \mathbb{R}^4\setminus \overline{\Omega}.
	$$
	Since $\partial \Omega$ is smooth ,  $u \in H^2(\mathbb{R}^4\setminus \overline{\Omega})$ and $\nabla u \in H^1(\mathbb{R}^4\setminus \overline{\Omega}),$  by Trace Theorem, there are constants $K_1,K_2>0$ satisfying 
	$$
	\left|\left|u\right|\right|_{L^2(\partial \Omega)} \leq K_1\left|\left|u\right|\right|_{H^2(\mathbb{R}^4\setminus \overline{\Omega})}=0,
	$$
	and
	$$
	\left|\left|\nabla u\right|\right|_{L^2(\partial \Omega)} \leq K_2 \left|\left|\nabla u\right|\right|_{H^1(\mathbb{R}^4\setminus \overline{\Omega})}=0,
	$$ 
loading to $u \in H_0^2(\Omega).$ To complete the proof of $i)$, consider a test function $\varphi \in C_{0}^{\infty}(\Omega)$, and note that
	\begin{equation}\label{21}
	I'_{\lambda_n}(u_n)\varphi=\int_{\Omega}\left[\Delta u_n \Delta \phi + u_n \varphi\right]dx-\int_{\Omega}f(u_n)\varphi dx.
	\end{equation} 
Since $\left\lbrace u_n\right\rbrace $ is a $(PS)_{\infty}$ sequence, we derive that
	\begin{equation}\label{22}
	I'_{\lambda_n}(u_n)\varphi \rightarrow 0.
	\end{equation}
Recalling that $u_n \rightharpoonup u$ in $H^2(\mathbb{R}^{4})$, we must have 
	\begin{equation}\label{23}
	\int_{\Omega}\left[\Delta u_n \Delta \varphi + u_n \varphi\right]dx \rightarrow \int_{\Omega}\left[\Delta u \Delta \varphi + u \varphi\right]dx
	\end{equation}
	and
	\begin{equation}\label{24}
	\int_{\Omega}f(u_n)\varphi dx \rightarrow \int_{\Omega}f(u)\varphi dx.
	\end{equation}
	Therefore, from (\ref{21})-(\ref{24}), 
	$$
	\int_{\Omega}\left[\Delta u \Delta \varphi +u \phi\right]dx=\int_{\Omega}f(u)\varphi dx,\ \forall \varphi \in C_0^{\infty}(\Omega).
	$$
	As $C_{0}^{\infty}(\Omega)$ is dense in $H_0^2(\Omega)$, the above equality gives
	$$
	\int_{\Omega}\left[\Delta u \Delta v +uv\right]dx=\int_{\Omega}f(u)v dx,\ \forall v \in H_0^2(\Omega),
	$$
	showing that $u$ is weak solution of the problem
	\begin{equation}\label{25}
	\left\{ \begin{array}{c}
	\Delta^2 u  +u  =  f(u),\, \mbox{in} \, \Omega_j, \ \\
	u=\dfrac{\partial u}{\partial \eta} =0,\ \mbox{on}\  \partial\Omega_j,
	\end{array}
	\right.
	\end{equation}
	For $ii)$, note that
	\begin{align} \label{26}
	\left|\left|u_n-u\right|\right|_{\lambda_n}^2&= \left|\left|u_n\right|\right|_{\lambda_n}^2+\left|\left|u\right|\right|_{\lambda_n}^2-2\int_{\mathbb{R}^4}\left[\Delta u_n\Delta u+ (\lambda_n V(x)+1)u_nu\right]dx.& 
	\end{align}
From $i)$, 
$$
	\|u\|_{\lambda_n}^2=\|u\|^2_{H_0^2(\Omega)},
$$
and so, 
	$$
	\int_{\mathbb{R}^4}\left[\Delta u_n\Delta u+ (\lambda_n V(x)+1)u_nu\right]dx=\|u\|_{H_0^2(\Omega)}+o_n(1).
	$$
	Thus, we can rewrite (\ref{26}) as
	\begin{align} \label{27}
	\left|\left|u_n-u\right|\right|_{\lambda_n}^2&= \left|\left|u_n\right|\right|_{\lambda_n}^2-\left|\left|u\right|\right|_{H_0^2(\Omega)}^2+o_n(1).& 
	\end{align}
Gathering the boundedness of $\{\|u_n\|_{\lambda_n}\}$ with the limit $ \|I'_{\lambda_n}(u_n)\|_{E'_{\lambda_n}}\rightarrow 0,$ we find the limit 
	$$
	I'_{\lambda_n}(u_n)u_n \rightarrow 0.
	$$
	Hence,
	\begin{equation} \label{28}
	\left|\left|u_n\right|\right|_{\lambda_n}^2= I'_{\lambda_n}(u_n)u_n+\int_{\mathbb{R}^4}f(u_n)u_ndx =\int_{\mathbb{R}^4}f(u_n)u_ndx+o_n(1).
	\end{equation}
	On the other hand,  we know that the limit  $	I'_{\lambda_n}(u_n)u \to 0 $ is equivalent to 
  $$
	\int_{\Omega}\left[ \Delta u_n \Delta u + u_nu\right]dx-\int_{\Omega}f(u_n)udx=o_n(1),
	$$
	which loads to 
	\begin{equation} \label{29}
	\int_{\mathbb{R}^4}\left[ \left|\Delta u\right|^2 +
	\left|u\right|^2\right]dx=\int_{\mathbb{R}^4}f(u)udx.
	\end{equation}
	Combining (\ref{27}) with (\ref{28}) and (\ref{29}), we see that 
	$$
	\left|\left|u_n-u\right|\right|_{\lambda_n}^2=\int_{\mathbb{R}^4}f(u_n)u_ndx-\int_{\mathbb{R}^4}f(u)udx+o_n(1).
	$$
	Using the Lebesgue's Theorem together with Adams-type inequality $(\ref{X2}),$ we get  
	$$
	\int_{\mathbb{R}^4}f(u_n)u_ndx\rightarrow \int_{\mathbb{R}^4}f(u)udx,
	$$
	finishing the proof of $ii)$. 
	The proof of $iii)$ follows from $ii)$ and the inequality below
	$$
	\lambda_n\int_{\mathbb{R}^4}V(x)\left|u_n\right|^2dx=\lambda_n\int_{\mathbb{R}^4}V(x)\left|u_n-u\right|^2dx
	\leq \|u_n-u\|_{\lambda_n}^2 .
	$$ \hspace{13.5cm}
\end{proof}

\section{The Existence of Multi-bump Solutions }

In this section, we denote by  $I_{\lambda,j}:H^2(\Omega'_j)  \rightarrow \mathbb{R}
$ the functional given by

$$
I_{\lambda,j}(u)=\frac{1}{2}\int_{\Omega'_j}\left[\left|\Delta u\right|^2+(\lambda V(x)+1)\left| u\right|^2\right]dx-\int_{\Omega'_j}F(u)dx.
$$

It follows from \cite{S} that $I_j$ and  $I_{\lambda,j}$, where $I_j $ is the functional defined in the section $2$, satisfy the mountain pass geometry. Hereafter, we denote by $c_j$ and
$c_{\lambda,j}$ the mountain pass levels related to the functionals  $I_j$ and $I_{\lambda,j}$. Since those functionals satisfy to the
Palais-Smale condition in the interval $[0,S]$ and $c_{\lambda,j}\le c_j \le S,\ \mbox{for all}\ j \in \{1,\cdots,k\}$, we conclude from Mountain Pass Theorem due to
Ambrosetti-Rabinowitz that there exist $w_j \in H_0^2(\Omega_j)$ and  $v_j
\in H^2_0(\Omega'_j)$ such that
$$
I_j(w_j)=c_j,\ I_{\lambda,j}(v_j)=c_{\lambda,j}\ \mbox{and}\ I'_j(w_j)=I'_{\lambda,j}(v_j)=0.
$$

In what follows, consider $\Gamma=\{1,2,\cdots,l\}$, with $l \leq k$, and $c_{\Gamma}=\sum_{j=1}^{l}c_j$. We fix $\epsilon>0$ and $\zeta>0$ such that   
$$I_j((1-\epsilon)w_j, I_j((1+\epsilon)w_j)< c_j-\zeta, \forall j \in \Gamma.
$$

Let us set $Q=(1-\epsilon,1+\epsilon)^l$ and define  $\gamma_0:\overline{Q} \rightarrow E_{\lambda}$ by
$$
\gamma_0(\overrightarrow{s})(x)=\sum_{j=1}^{l}s_jw_j(x),\ \forall \overrightarrow{s}=(s_1,\cdots,s_l)\in \overline{Q}.
$$
In the case of polynomial subcritical growth  $(N\ge 4),$ Alves and N\'obrega in \cite{AN} considered a cube $\tilde{Q}=(1/R^2,1)^l,$ where $R>0$ was chosen large, thus $|\tilde{Q}|$ was near to $1$. In our case, since we are working with critial exponential growth, we need to consider a cube  $Q=(1-\epsilon,1+\epsilon)^l$,  where $\epsilon>0$ will be taken small, which implies $|Q|$ is near to $0$.
   
In what follows, we denote by $\Sigma_{\lambda}$ the class of continuous path $\gamma \in C(\overline{Q},E_{\lambda}\setminus \{0\})$ satisfying the following conditions: 
$$
\gamma=\gamma_0\ \mbox{on}\ \partial(\overline{Q}) \leqno{(a)}
$$
and 
$$
I_{\lambda,\mathbb{R}^4\setminus \Omega'_{\Gamma}}(\gamma(\overrightarrow{s}))\ge 0, \leqno{(b)}
$$
where $I_{\lambda,\mathbb{R}^4\setminus \Omega'_{\Gamma}}: H^{2}(\mathbb{R}^4\setminus \Omega'_{\Gamma}) \to \mathbb{R}$ is the functional defined  by
$$
I_{\lambda,\mathbb{R}^4\setminus \Omega'_{\Gamma}}(u)=\frac{1}{2}\int_{\mathbb{R}^4\setminus \Omega'_{\Gamma}}\left[\left|\Delta u\right|^2+(\lambda V(x)+1)\left| u\right|^2\right]dx-\int_{\mathbb{R}^4\setminus \Omega'_{\Gamma}}F(u)dx.
$$
Notice that $\Sigma_{\lambda} \neq \emptyset$, because $\gamma_0 \in \Sigma_{\lambda}$. 

Using the class $\Sigma_{\lambda}$, we define the following minimax level
$$
b_{\lambda,\Gamma}=\inf_{\gamma \in \Gamma_*}\max_{\overrightarrow{s} \in \overline{Q}}I_{\lambda}(\gamma(\overrightarrow{s})).
$$

\begin{lemma}\label{l6}
	For each $\gamma \in \Gamma_*,$ there is
	$\overrightarrow{t}\in\overline{Q}$ verifying 
	$$
	I'_{\lambda,j}(\gamma(\overrightarrow{t}))\gamma(\overrightarrow{t})=0,\ \mbox{for}\ j\in\Gamma.
	$$
\end{lemma}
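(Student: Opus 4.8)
The plan is to recast the simultaneous conditions $I'_{\lambda,j}(\gamma(\overrightarrow{t}))\gamma(\overrightarrow{t})=0$, $j\in\Gamma$, as a single zero-finding problem on the cube $\overline{Q}$ and to solve it by Miranda's theorem (the $l$-dimensional version of Bolzano's intermediate value theorem, itself a consequence of Brouwer's fixed point theorem). To this end I would introduce the map $\Phi=(\Phi_1,\dots,\Phi_l):\overline{Q}\to\mathbb{R}^l$ defined by $\Phi_j(\overrightarrow{s})=I'_{\lambda,j}(\gamma(\overrightarrow{s}))\gamma(\overrightarrow{s})$, where $\gamma(\overrightarrow{s})$ is understood as its restriction to $\Omega'_j$. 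Continuity of $\Phi$ is immediate from the continuity of $\gamma$ and the fact that each $I_{\lambda,j}$ is of class $C^1$, and a zero of $\Phi$ is exactly the point $\overrightarrow{t}$ we seek.

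The heart of the argument is the analysis of $\Phi$ on $\partial\overline{Q}$, where condition $(a)$ forces $\gamma=\gamma_0$. Because $w_1,\dots,w_l$ have pairwise disjoint supports contained in the mutually disjoint sets $\Omega_j\subset\Omega'_j$, and because $V\equiv 0$ on each $\Omega_j$, the restriction $\gamma_0(\overrightarrow{s})|_{\Omega'_j}$ equals $s_jw_j$ and $I_{\lambda,j}(s_jw_j)=I_j(s_jw_j)$. Hence on $\partial\overline{Q}$ the component $\Phi_j$ depends only on $s_j$ and reduces to a one-variable fibering derivative: writing $g_j(t)=I_j(tw_j)$, one has $\Phi_j=t\,g_j'(t)=I'_j(tw_j)(tw_j)$ evaluated at $t=s_j$.

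I would then exploit the mountain-pass structure. Hypothesis $(f_4)$ ensures that $t\mapsto g_j(t)$ is strictly increasing on $(0,1)$ and strictly decreasing on $(1,\infty)$, with unique maximum at the critical point $t=1$, where $g_j'(1)=I'_j(w_j)w_j=0$. Therefore $t\,g_j'(t)>0$ for $0<t<1$ and $t\,g_j'(t)<0$ for $t>1$; since the choice of $\epsilon$ and $\zeta$ keeps $1\mp\epsilon$ strictly off the maximum (as recorded by $I_j((1\pm\epsilon)w_j)<c_j-\zeta$), evaluating on the two faces gives $\Phi_j>0$ on $\{s_j=1-\epsilon\}$ and $\Phi_j<0$ on $\{s_j=1+\epsilon\}$. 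These are precisely the hypotheses of Miranda's theorem on the box $\overline{Q}=[1-\epsilon,1+\epsilon]^l$: for each $j$, $\Phi_j$ keeps one sign on the face $s_j=1-\epsilon$ and the opposite sign on the face $s_j=1+\epsilon$. Miranda's theorem then produces $\overrightarrow{t}\in\overline{Q}$ with $\Phi(\overrightarrow{t})=0$, which is the assertion.

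I expect the main obstacle to be the rigorous justification of the boundary step, namely confirming that on $\partial\overline{Q}$ the component $\Phi_j$ genuinely decouples into the scalar function $t\,g_j'(t)$ — this rests on the disjointness of the supports and on $V\equiv0$ in $\Omega_j$ — and that $(f_4)$ forces strict monotonicity of $t\mapsto I_j(tw_j)$ away from $t=1$, so that the boundary inequalities are strict and Miranda's theorem indeed applies.
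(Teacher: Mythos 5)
Your proposal is correct and follows essentially the same route as the paper: the paper introduces the very same map $\widetilde{\gamma}(\overrightarrow{s})=\left(I'_{\lambda,1}(\gamma(\overrightarrow{s}))\gamma(\overrightarrow{s}),\cdots,I'_{\lambda,l}(\gamma(\overrightarrow{s}))\gamma(\overrightarrow{s})\right)$, uses condition $(a)$ to reduce the boundary behavior to the one-dimensional fibering maps $t\mapsto I'_j(tw_j)(tw_j)$, and concludes by Brouwer degree, $\deg(\widetilde{\gamma},Q,(0,\cdots,0))=(-1)^l\neq 0$, which is exactly what your appeal to Miranda's theorem amounts to. The only (minor) difference is in the boundary step: the paper rules out zeros of $\widetilde{\gamma}$ on $\partial\overline{Q}$ by contradiction, via the Nehari-type characterization that $I'_j(u)u=0$ with $u\neq 0$ forces $I_j(u)\geq c_j$, whereas your explicit sign analysis on opposite faces (strict positivity at $s_j=1-\epsilon$, strict negativity at $s_j=1+\epsilon$, from $(f_4)$) in fact supplies the justification for the degree value $(-1)^l$ that the paper leaves implicit.
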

\begin{proof}
	Given $\gamma \in \Sigma_{\lambda}$, consider the map $\widetilde{\gamma}:\overline{Q} \rightarrow \mathbb{R}^l$ defined by
	$$
	\widetilde{\gamma}(\overrightarrow{s})=\left(I'_{\lambda,1}(\gamma(\overrightarrow{s}))\gamma(\overrightarrow{s}), \cdots, I'_{\lambda,l}(\gamma(\overrightarrow{s}))\gamma(\overrightarrow{s})	\right).
	$$ 
	For $\overrightarrow{s} \in \partial(\overline{Q}),$ we know that
	$$
	\gamma(\overrightarrow{s})=\gamma_0(\overrightarrow{s}).
	$$
	Then,
	$
	I'_{\lambda,j}(\gamma_0(\overrightarrow{s}))(\gamma_0(\overrightarrow{s}))=0$
	which implies
	 $$s_j \not \in \{1-\epsilon,1+\epsilon\}, \forall j \in \Gamma.
	$$
	In fact, otherwise
	$$
	I'_{\lambda,j}(\gamma_0(\overrightarrow{s}))(\gamma_0(\overrightarrow{s}))=0
	$$
	for $s_{j}=(1-\epsilon)$ or $s_{j}=(1+\epsilon)$, that is, 
	$$
	I'_{j}((1-\epsilon)w_j)((1-\epsilon)w_j)=0 \quad \mbox{or} \quad I'_{j}((1+\epsilon)w_j)((1+\epsilon)w_j)=0
	$$
	implying  that 
	$$
	I_{j}((1-\epsilon)w_j)\geq c_j \quad \mbox{or} \quad I_{j}((1+\epsilon)w_j)\geq c_j,
	$$
	which contradicts the choice of $\epsilon$. Hence, 
	$$
	(0,0,\cdots,0) \not 	\in\widetilde{\gamma}(\partial(\overline{Q})).
	$$ 
	Then, by Topological Degree 
	$$
	deg(\widetilde{\gamma},Q,(0,0,\cdots,0))=(-1)^l\not= 0,
	$$
	from where it follows that there exists $\overrightarrow{t}\in Q$
	satisfying
$$
	I'_{\lambda,j}(\gamma(\overrightarrow{t}))(\gamma(\overrightarrow{t}))=0,\ \mbox{for}\ j \in \Gamma.
$$
\mbox{\hspace{13,4 cm}}\end{proof}

\begin{proposition}\label{p3} \mbox{}\\
\noindent $a) \,\, \sum_{j=1}^{l}c_{\lambda,j}\leq b_{\lambda, \Gamma} \leq c_{\Gamma},\, \forall \lambda \geq 1.$ \\
\noindent $b)$ \,\, For $\gamma \in \Gamma_*\ \mbox{and}\ \overrightarrow{s}\in \partial(\overline{Q})$, we have 
$$
I_{\lambda}(\gamma(\overrightarrow{s}))< c_{\Gamma},\, \forall \lambda \geq 1. 
$$

\end{proposition}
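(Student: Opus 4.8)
The plan is to reduce both items to a single additive decomposition of $I_\lambda$. First I would record that, since the sets $\Omega'_1,\dots,\Omega'_l$ are pairwise disjoint by $(ii)$ and $\mathbb{R}^4=\Omega'_\Gamma\cup(\mathbb{R}^4\setminus\Omega'_\Gamma)$, splitting the integrals defining $I_\lambda$ over this partition gives
$$
I_\lambda(u)=\sum_{j=1}^{l}I_{\lambda,j}(u)+I_{\lambda,\mathbb{R}^4\setminus\Omega'_\Gamma}(u),\qquad\forall u\in E_\lambda .
$$
Both parts follow from this identity, so I would establish it at the outset.

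To prove the upper bound $b_{\lambda,\Gamma}\le c_\Gamma$ in $a)$, I would test with $\gamma_0\in\Gamma_*$. Because each $w_j$ is supported in $\overline{\Omega_j}\subset\Omega'_j$ and the $\Omega_j$ are disjoint, the functions $w_1,\dots,w_l$ have disjoint supports; hence $\gamma_0(\vec s)$ vanishes on $\mathbb{R}^4\setminus\Omega'_\Gamma$ and $I_{\lambda,j}(\gamma_0(\vec s))=I_{\lambda,j}(s_jw_j)$. As $V\equiv 0$ on $\Omega_j$, this last quantity equals $I_j(s_jw_j)$, so $I_\lambda(\gamma_0(\vec s))=\sum_{j=1}^{l}I_j(s_jw_j)$. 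Since $w_j$ is a critical point of $I_j$, hypothesis $(f_4)$ forces the fibering map $t\mapsto I_j(tw_j)$ to attain its maximum at $t=1$, whence $I_j(s_jw_j)\le I_j(w_j)=c_j$ for every $s_j\ge 0$. Maximizing over $\overline Q$ and passing to the infimum over $\Gamma_*$ yields $b_{\lambda,\Gamma}\le c_\Gamma$.

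For the lower bound $\sum_{j=1}^{l}c_{\lambda,j}\le b_{\lambda,\Gamma}$, I would fix an arbitrary $\gamma\in\Gamma_*$ and apply Lemma~\ref{l6} to obtain $\vec t\in Q$ with $I'_{\lambda,j}(\gamma(\vec t))\gamma(\vec t)=0$ for all $j\in\Gamma$; thus each restriction $\gamma(\vec t)|_{\Omega'_j}$ is a critical point of the fibering map of $I_{\lambda,j}$, i.e.\ it lies on the Nehari manifold of $I_{\lambda,j}$. The characterization of $c_{\lambda,j}$ as the least energy on that manifold then gives $I_{\lambda,j}(\gamma(\vec t))\ge c_{\lambda,j}$, while condition $(b)$ of the class $\Gamma_*$ gives $I_{\lambda,\mathbb{R}^4\setminus\Omega'_\Gamma}(\gamma(\vec t))\ge 0$. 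Substituting these into the decomposition, $\max_{\vec s\in\overline Q}I_\lambda(\gamma(\vec s))\ge I_\lambda(\gamma(\vec t))\ge\sum_{j=1}^{l}c_{\lambda,j}$, and taking the infimum over $\gamma\in\Gamma_*$ finishes $a)$.

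Finally, $b)$ reuses the computation from the upper bound: if $\vec s\in\partial(\overline Q)$ then $\gamma(\vec s)=\gamma_0(\vec s)$ by $(a)$, so $I_\lambda(\gamma(\vec s))=\sum_{j=1}^{l}I_j(s_jw_j)$. Some coordinate $s_{j_0}\in\{1-\epsilon,1+\epsilon\}$, and the defining choice of $\epsilon,\zeta$ gives $I_{j_0}(s_{j_0}w_{j_0})<c_{j_0}-\zeta$, whereas $I_j(s_jw_j)\le c_j$ for the remaining indices as above; summing yields $I_\lambda(\gamma(\vec s))\le c_\Gamma-\zeta<c_\Gamma$. The step requiring the most care is the lower bound in $a)$: one must guarantee that each restriction $\gamma(\vec t)|_{\Omega'_j}$ is genuinely nonzero, so that it really belongs to the Nehari manifold and the inequality $I_{\lambda,j}(\gamma(\vec t))\ge c_{\lambda,j}$ applies rather than the useless bound $I_{\lambda,j}(\gamma(\vec t))=0$. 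This nontriviality is tied to the degree-theoretic argument of Lemma~\ref{l6} placing $\vec t$ in the interior $Q$, together with the uniform separation of nonzero Nehari points from the origin furnished by $(f_2)$.
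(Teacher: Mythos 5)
Your proof is correct and follows essentially the same route as the paper's: the additive splitting of $I_{\lambda}$ over $\Omega'_1,\dots,\Omega'_l$ and $\mathbb{R}^4\setminus\Omega'_{\Gamma}$, the upper bound via $\gamma_0$ and the fibering-map maximum at $t=1$ from $(f_4)$, the lower bound via Lemma \ref{l6} plus the Nehari characterization of $c_{\lambda,j}$ and condition $(b)$ of the class, and the boundary estimate with $s_{j_0}\in\{1-\epsilon,1+\epsilon\}$ for part $b)$. The nontriviality of the restrictions $\gamma(\overrightarrow{t})|_{\Omega'_j}$ that you flag is a real subtlety, but the paper's own proof relies on it just as implicitly, so your write-up is in no way weaker than the original.
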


\begin{proof}
	\begin{description}
		\item[a)]Since $\gamma_0 \in \Sigma_{\lambda}$, 
		\begin{align*}
		b_{\lambda,\Gamma} &\leq \max_{\overrightarrow{s}\in \overline{Q}}I_{\lambda,j}(\gamma_0(\overrightarrow{s}))&\\
		&\leq \max_{\overrightarrow{s}\in \overline{Q}}I_{\lambda,j}(\sum_{i=1}^{l}s_iw_i(x))&\\
		&\leq \sum_{j=1}^{l}\max_{s_j \in [(1-\epsilon),(1+\epsilon)]}I_{j}(s_jw_j(x))&\\
		&\leq \sum_{j=1}^{l}c_j=c_{\Gamma.}&
		\end{align*}
		For each $\gamma \in \Sigma_{\lambda}$ and  $\overrightarrow{t}\in \overline{Q}$ as in Lemma \ref{l6}, we find 
		$$
		I_{\lambda,j}(\gamma(\overrightarrow{t})) \geq c_{\lambda,j}, \forall j \in \Gamma,
		$$
		where we have used the following characterization of $c_{\lambda,j}$
		$$
		c_{\lambda,j}=\inf\{I_{\lambda,j}(u);\ u \in E_{\lambda}\setminus \{0\};\,  I'_{\lambda,j}(u)u=0\}. 
		$$ 
		On the other hand, recalling that $I_{\lambda,\mathbb{R}^4\setminus \Omega'_{\Gamma}}(\gamma(\overrightarrow{s}))\geq 0$, we have
		$$
		I_{\lambda}(\gamma(\overrightarrow{s})) \geq
		\sum_{j=1}^{l}I_{\lambda,j}(\gamma(\overrightarrow{s})),
		$$ 
		and so,
		$$
		\max_{\overrightarrow{s}\in \overline{Q}}I_{\lambda}(\gamma(\overrightarrow{s})) \geq I_{\lambda}(\gamma(\overrightarrow{t})) \geq \sum^{l}_{j=1}c_{\lambda,j}.
		$$ 
		The last inequality combined with the definition of $b_{\lambda, \Gamma}$ gives  
		$$
		b_{\lambda, \Gamma}\geq \sum_{j=1}^{l}c_{\lambda, j},
		$$ 
		This completes the proof of $a).$
		\item[b)] As $\gamma(\overrightarrow{s})=\gamma_0(\overrightarrow{s})$ on $\partial(\overline{Q}),$ we derive that
		$$
		I_{\lambda}(\gamma_0(\overrightarrow{s}))=\sum_{j=1}^{l}I_j(s_jw_j), \forall \overrightarrow{s} \in \partial(\overline{Q}).
		$$
		Since 
		$$
		I_j(s_jw_j) \leq c_j, \quad \forall j \in \Gamma
		$$ 
		and there is $j_0 \in \Gamma$, such that $s_{j_0} \in \{(1-\epsilon),(1+\epsilon) \}$, we have  
		$$
		I_{\lambda}(\gamma_0(\overrightarrow{s})) < c_{\Gamma}.
		$$
		
	\end{description}
\mbox{\hspace{13 cm}} \end{proof}

\begin{corollary}
	
	$b_{\lambda, \Gamma} \rightarrow c_{\Gamma},$ when $\lambda
	\rightarrow +\infty.$
	
\end{corollary}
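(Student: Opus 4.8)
The plan is to deduce the convergence from the two-sided bound already proved in Proposition \ref{p3}a),
$$\sum_{j=1}^l c_{\lambda,j} \leq b_{\lambda,\Gamma} \leq c_\Gamma = \sum_{j=1}^l c_j, \qquad \forall \lambda \geq 1,$$
by showing that each localized mountain pass level $c_{\lambda,j}$ converges to the limiting level $c_j$ as $\lambda \to +\infty$. Once $c_{\lambda,j} \to c_j$ is established for every $j \in \Gamma$, summing and squeezing in the displayed inequality immediately gives $b_{\lambda,\Gamma} \to c_\Gamma$.

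For the upper bound I would note that $I_{\lambda,j}$ restricted to $H_0^2(\Omega_j)$ (functions extended by zero) coincides with $I_j$, since $V \equiv 0$ on $\Omega_j$; as $H_0^2(\Omega_j) \subset H_0^2(\Omega_j')$, every mountain pass path for $I_j$ is also admissible for $I_{\lambda,j}$ with unchanged maximum, so that $c_{\lambda,j} \leq c_j$ for all $\lambda$. It then remains to prove the lower bound $\liminf_{\lambda\to\infty} c_{\lambda,j} \geq c_j$.

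To obtain the lower bound, I would fix an arbitrary sequence $\lambda_n \to +\infty$ and take the mountain pass critical points $v_n \in H_0^2(\Omega_j')$ with $I_{\lambda_n,j}(v_n) = c_{\lambda_n,j}$ and $I'_{\lambda_n,j}(v_n) = 0$. Since $c_{\lambda_n,j} \leq c_j \leq S$, these constitute a bounded $(PS)_\infty$-type sequence for the localized functionals, and I would repeat the argument of Proposition \ref{p2} to extract $v_n \rightharpoonup v$ with $v \in H_0^2(\Omega_j)$ solving \eqref{20}, together with the strong convergence $\|v_n - v\|_{\lambda_n} \to 0$ and, crucially, the energy convergence $c_{\lambda_n,j} \to I_j(v)$. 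Nontriviality of $v$ I would secure from Lemma \ref{l3}: viewing $\{v_n\}$ as a constant $(PS)_{c_{\lambda_n,j}}$ sequence forces $c_{\lambda_n,j} \geq c_* > 0$ with $c_*$ independent of $\lambda$, so that $I_j(v) = \lim_n c_{\lambda_n,j} \geq c_* > 0$ and hence $v \neq 0$.

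Finally, since $c_j$ is the least energy (mountain pass) level for $I_j$ and $v$ is a nontrivial solution of \eqref{20}, one has $I_j(v) \geq c_j$; combined with the energy convergence this gives $\liminf_n c_{\lambda_n,j} \geq c_j$, which together with $c_{\lambda_n,j} \leq c_j$ yields $c_{\lambda_n,j} \to c_j$. As $\lambda_n \to +\infty$ was arbitrary, $c_{\lambda,j} \to c_j$ for each $j \in \Gamma$, completing the argument via the squeeze. I expect the main obstacle to be the $(PS)_\infty$ convergence step, specifically the strong convergence $\|v_n - v\|_{\lambda_n} \to 0$ and the passage to the limit $\int_{\mathbb{R}^4} f(v_n)v_n \to \int_{\mathbb{R}^4} f(v)v$, which relies on the Adams-type inequality \eqref{X2} and Corollary \ref{cc1} to control the critical exponential nonlinearity, exactly as in Proposition \ref{p2}.
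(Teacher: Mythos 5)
Your proposal is correct and takes essentially the same route as the paper: the paper's proof just combines the squeeze from Proposition \ref{p3}(a) with the convergence $c_{\lambda,j}\to c_j$ for each $j\in\Gamma$, which it attributes to ``well known arguments.'' Your sketch of that convergence (upper bound $c_{\lambda,j}\le c_j$ by zero-extension of paths using $V\equiv 0$ on $\Omega_j$, lower bound via a localized version of the $(PS)_\infty$ analysis of Proposition \ref{p2}, nontriviality of the limit from the uniform level bound of Lemma \ref{l3}, and the least-energy characterization of $c_j$) is a sound instantiation of the details the paper omits.
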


\begin{proof}
Using well known arguments, it is possible to prove that $c_{\lambda, j} \rightarrow c_j$ for each $j \in \Gamma$. Therefore, by Proposition $\ref{p3}$, $b_{\lambda,\Gamma} \rightarrow
	c_{\Gamma}$ when $\lambda \rightarrow +\infty.$
\end{proof}

\section{Proof of the Main Theorem}
Hereafter, we consider
$$
R=1+\sum_{j=1}^{l}\sqrt{\left(\frac{1}{2}-\frac{1}{\theta}\right)c_j},
$$
$$
\overline{B}_{R}(0)=\{ u \in E_{\lambda};\|u\|_{\lambda}\leq R+1\},
$$ 
and for small $\mu>0$, we define
$$
A_{\mu}^{\lambda}=\left\{ u \in \overline{B}_{R+1}; \left|\left|u\right|\right|_{\lambda,\mathbb{R}^N\setminus \Omega'_{\Gamma}}\leq \mu,\ I_{\lambda,\mathbb{R}^N\setminus 	\Omega'_{\Gamma}}(u)\geq 0\ \mbox{and}\ \left|I_{\lambda,j}(u)-c_j\right|\leq \mu, \forall j \in  \Gamma \right\},
$$
and
$$
I_{\lambda}^{c_{\Gamma}}=\left\{ u \in E_{\lambda}/ I_{\lambda}(u)\leq c_{\Gamma}\right\}.
$$
Note that $A_{\mu}^{\lambda}\cap I_{\lambda}^{c_{\Gamma}} \neq \emptyset,$  because $w= \sum_{j=1}^{l}w_j \in A_{\mu}^{\lambda}\cap I_{\lambda}^{c_{\Gamma}} .$

Another important estimate is that, for $\epsilon>0$ small enough,
\begin{equation}
	\|\gamma_0(\overrightarrow{s})\|^2_{\lambda}\le (1+\epsilon)^2\|\sum_{j=1}^{l}w_j\|^2_{\lambda}\le M=\frac{\theta c_{\Gamma}}{\theta-2}(1+\epsilon)^2<1.
\end{equation}
 Fixing
\begin{equation}\label{30}
0< \mu < \frac{1}{4}\min\{c_j;j \in  \Gamma\}
\end{equation}
we have the following uniform estimate from below for $\|I'_{\lambda}(u)\|$ in the set $\left(A_{2\mu}^{\lambda}\setminus
A_{\mu}^{\lambda}\right)\cap I_{\lambda}^{c_{\Gamma}}.$

\begin{proposition}\label{p4}
	Let $\mu >0$ satisfy $(\ref{30})$. Then, there exist $\sigma_0>0$ independent of $\lambda$ and
	$\Lambda_* \geq 1$ such that
	\begin{equation}\label{31}
	\ \|I'_{\lambda}(u)\| \geq \sigma_0\ \mbox{for}\ \lambda \geq
	\Lambda_*\ \mbox{and for all}\ u \in
	\left(A_{2\mu}^{\lambda}\setminus A_{\mu}^{\lambda}\right)\cap
	I_{\lambda}^{c_{\Gamma}}.
	\end{equation}
\end{proposition}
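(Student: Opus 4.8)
The plan is to argue by contradiction, exploiting the $(PS)_\infty$ analysis of Proposition \ref{p2}. Suppose the conclusion fails. Then for every choice $\sigma_0 = 1/n$ and $\Lambda_* = n$ there is some $\lambda_n \geq n$ (so $\lambda_n \to \infty$) and some $u_n \in \left(A_{2\mu}^{\lambda_n}\setminus A_{\mu}^{\lambda_n}\right)\cap I_{\lambda_n}^{c_\Gamma}$ with $\|I'_{\lambda_n}(u_n)\| < 1/n$. I would first record what membership in these sets buys us: since $u_n \in \overline{B}_{R+1}$ the sequence $\{\|u_n\|_{\lambda_n}\}$ is bounded; since $I_{\lambda_n}(u_n) \leq c_\Gamma$ and $\|I'_{\lambda_n}(u_n)\| \to 0$, the sequence $\{u_n\}$ is a $(PS)_\infty$ sequence at some level $c \in [0,S]$ (after passing to a subsequence so that $I_{\lambda_n}(u_n)$ converges; the level is automatically in $[0,S]$ by the boundedness estimates and the fact that $I_{\lambda_n}(u_n) \leq c_\Gamma \leq S$). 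This is the crucial reduction: the contradiction hypothesis manufactures exactly the kind of sequence whose limiting behavior is completely pinned down by Proposition \ref{p2}.

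Next I would invoke Proposition \ref{p2} to extract a subsequence with $u_n \rightharpoonup u$ in $H^2(\mathbb{R}^4)$, where $u \equiv 0$ on $\mathbb{R}^4 \setminus \Omega_\Gamma$, each restriction $u|_{\Omega_j}$ solves \eqref{20}, and the strong convergences
$$
\|u_n - u\|_{\lambda_n}^2 \to 0,\quad \lambda_n\!\int_{\mathbb{R}^4}\! V(x)|u_n|^2\,dx \to 0,\quad \|u_n\|_{\lambda_n,\mathbb{R}^4\setminus\Omega_\Gamma}^2 \to 0,\quad \|u_n\|_{\lambda_n,\Omega_j'}^2 \to \int_{\Omega_j}(|\Delta u|^2 + |u|^2)\,dx
$$
hold. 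The strategy now is to show that these limits force $u_n$ to land in the smaller set $A_\mu^{\lambda_n}$ for large $n$, contradicting $u_n \notin A_\mu^{\lambda_n}$. Concretely: the second and third limits give $\|u_n\|_{\lambda_n,\mathbb{R}^4\setminus\Omega_\Gamma'} \to 0$, so eventually $\|u_n\|_{\lambda_n,\mathbb{R}^4\setminus\Omega_\Gamma'} \leq \mu$; by the same strong convergence on the complement together with $I_{\lambda_n,\mathbb{R}^4\setminus\Omega_\Gamma'}(u_n)\to 0 \geq 0$ (up to the vanishing $F$-term, which is controlled by Corollary \ref{cc1} exactly as in Lemma \ref{l2}), the condition $I_{\lambda_n,\mathbb{R}^4\setminus\Omega_\Gamma'}(u_n) \geq 0$ passes to the limit. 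Finally the fourth limit, combined with the fact that each $u|_{\Omega_j}$ solves \eqref{20} and hence $I_j(u|_{\Omega_j}) = c_j$ (here one must check that the limit solution is at the least-energy/mountain-pass level, which follows because $u_n \in A_{2\mu}^{\lambda_n}$ forces $|I_{\lambda_n,j}(u_n) - c_j| \leq 2\mu$ and $I_{\lambda_n,j}(u_n) \to I_j(u|_{\Omega_j})$), yields $|I_{\lambda_n,j}(u_n) - c_j| \to 0 \leq \mu$ for each $j \in \Gamma$. Altogether $u_n \in A_\mu^{\lambda_n}$ for $n$ large, the desired contradiction.

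The main obstacle I anticipate is making the convergence $I_{\lambda_n,j}(u_n) \to I_j(u|_{\Omega_j}) = c_j$ rigorous and establishing that the limit value is exactly $c_j$ rather than merely some value within $2\mu$ of it. One needs that $u|_{\Omega_j} \not\equiv 0$ (so that $u$ is a genuine nontrivial solution and its energy is at least $c_j$ by the variational characterization) and that its energy does not overshoot; the constraint $|I_{\lambda_n,j}(u_n)-c_j|\le 2\mu$ with $\mu < \tfrac14 \min_j c_j$ prevents $u|_{\Omega_j}$ from vanishing, since vanishing would force $I_{\lambda_n,j}(u_n) \to 0$, violating the lower bound $I_{\lambda_n,j}(u_n) \geq c_j - 2\mu > \tfrac12 c_j > 0$. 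The nonlinear term $\int F(u_n)$ on each piece must be passed to the limit using the Adams-type inequality \eqref{X2} and Corollary \ref{cc1}, together with $\limsup\|u_n\|_{H^2} < 1/2$ from \eqref{e1}, to justify the dominated-convergence step; this is the one place where the critical exponential growth genuinely enters and where the smallness of $\mu$ and of $\|\gamma_0(\overrightarrow{s})\|_\lambda$ are used to keep everything below the Adams threshold.
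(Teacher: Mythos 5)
Your overall strategy coincides with the paper's: argue by contradiction, note that the contradiction hypothesis produces a $(PS)_\infty$ sequence, invoke Proposition \ref{p2}, and show that the resulting limits force $u_n \in A_{\mu}^{\lambda_n}$ for large $n$. Your nontriviality argument (if $u|_{\Omega_{j_0}}\equiv 0$ then $I_{\lambda_n,j_0}(u_n)\to 0$, which is incompatible with $|I_{\lambda_n,j_0}(u_n)-c_{j_0}|\le 2\mu$ and $\mu<\frac{1}{4}\min_j c_j$) is exactly the paper's case $ii)$. However, there is a genuine gap at the decisive step. You assert that $I_j(u|_{\Omega_j})=c_j$ ``follows because $u_n\in A_{2\mu}^{\lambda_n}$ forces $|I_{\lambda_n,j}(u_n)-c_j|\le 2\mu$ and $I_{\lambda_n,j}(u_n)\to I_j(u|_{\Omega_j})$.'' Those two facts only give $|I_j(u|_{\Omega_j})-c_j|\le 2\mu$; combined with the least-energy lower bound (valid once $u|_{\Omega_j}\not\equiv 0$) they yield $c_j\le I_j(u|_{\Omega_j})\le c_j+2\mu$, and nothing in your argument excludes, say, $I_j(u|_{\Omega_j})=c_j+\tfrac{3}{2}\mu$. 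In that scenario $|I_{\lambda_n,j}(u_n)-c_j|\to \tfrac{3}{2}\mu>\mu$, so you cannot conclude $u_n\in A_{\mu}^{\lambda_n}$, and the contradiction never materializes. You flag ``the energy does not overshoot'' as something to be checked, but you never supply a mechanism for it; the per-component $A_{2\mu}$ constraint cannot do this job.

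The missing idea --- and the way the paper closes the argument --- is to use the \emph{global} constraint $u_n\in I_{\lambda_n}^{c_\Gamma}$, i.e. $I_{\lambda_n}(u_n)\le c_\Gamma$, together with the fact that $c_j$ is the least energy level of $I_j$. By the limits $(\ref{32})$--$(\ref{34})$ and the strong convergence of Proposition \ref{p2}, the energy localizes:
$$
I_{\lambda_n}(u_n)\longrightarrow \sum_{j\in\Gamma} I_j\left(u|_{\Omega_j}\right),
$$
hence $\sum_{j\in\Gamma} I_j\left(u|_{\Omega_j}\right)\le c_\Gamma=\sum_{j\in\Gamma}c_j$. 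Once each restriction $u|_{\Omega_j}$ is nontrivial, it is a nontrivial solution of $(\ref{20})$, so $I_j\left(u|_{\Omega_j}\right)\ge c_j$ for every $j\in\Gamma$; summing these lower bounds and comparing with the upper bound forces $I_j\left(u|_{\Omega_j}\right)=c_j$ exactly, for every $j$ (any overshoot in one component would make the total exceed $c_\Gamma$). Only with this equality do you get $I_{\lambda_n,j}(u_n)\to c_j$, which together with $\|u_n\|_{\lambda_n,\mathbb{R}^4\setminus\Omega_\Gamma'}\to 0$ and $I_{\lambda_n,\mathbb{R}^4\setminus\Omega_\Gamma'}(u_n)\ge 0$ in the limit places $u_n$ in $A_{\mu}^{\lambda_n}$ for large $n$. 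Inserting this summation step (the paper's case $i)$) repairs your proof; the rest of your outline, including the treatment of the nonlinear term via Corollary \ref{cc1} and the Adams inequality, is sound.
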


\begin{proof}
	Arguing by contradiction, suppose that there are $\lambda_n \rightarrow +\infty$  and $u_n \in E_{\lambda_n}$, with
	$$
	u_n \in \left(A_{2\mu}^{\lambda_n}\setminus A_{\mu}^{\lambda_n}\right)\cap I_{\lambda_n}^{c_{\Gamma}} \quad \mbox{and} \quad \|I'_{\lambda_n}(u_n)\| \rightarrow 0.
	$$
	Since $u_n \in A_{2\mu}^{\lambda_n}$, the sequence
	$\left\{\|u_n\|_{\lambda_n}\right\}$ is bounded. Consequently
	$\left\{I_{\lambda_n}(u_n)\right\}$ is also bounded. Then,  passing to a subsequence if necessary,
	$$
	I_{\lambda_n}(u_n) \rightarrow c \in (-\infty,c_{\Gamma}].
	$$
		By Proposition \ref{p2}, passing to a subsequence if necessary, $u_n
	\rightarrow u$ in $H^2(\mathbb{R}^4)$  and $u \in
	H_{0}^{2}(\Omega_{\Gamma})$ is a solution of the problem (\ref{20}). Moreover,
	\begin{align}
	&\lambda_n \int_{\mathbb{R}^4}V(x)\left|u_n\right|^2dx \rightarrow 0,& \label{32}\\
	&\left|\left|u_n\right|\right|^2_{\lambda_n,\mathbb{R}^4\setminus \Omega_{\Gamma}}\rightarrow 0 & \label{33}\\
	&\left|\left|u_n\right|\right|^2_{\lambda_n,\Omega'_j}\rightarrow \int_{\Omega_j}\left[ \left|\Delta u\right|^2+\left|u\right|^2\right]dx,\ \forall j\in \Gamma.& \label{34}
	\end{align}
	Since $c_{\Gamma}=\sum_{j=1}^{l}c_j$ and $c_j$ is the least energy level for $I_j$, one of the following cases occurs:
	\begin{description}
		\item[i)] $u\left|_{\Omega_j}\neq 0\right.,\ \forall j \in \Gamma,$ or
		\item[ii)]$u\left|_{\Omega_{j_0}}=0\right.,$ for some $j_0 \in \Gamma.$
	\end{description}
	If $i)$ happens, from $(\ref{32})-(\ref{34})$ 
	$$
	I_j(u)=c_j, \quad \forall j \in \Gamma.
	$$
	Hence $u_n \in A_{\mu}^{\lambda_n}$ for $n$ large enough, which is a contradiction. 
	
	If $ii)$ happens, from $(\ref{32})\ \mbox{and}\ (\ref{33})$  $$\left|I_{\lambda_{n},j_0}(u_n)-c_{j_0})\right| \rightarrow c_{j_0} \geq 4\mu,$$
	which contradicts the  hypothesis  $u_n \in A_{2 \mu}^{\lambda_n}\setminus A_{\mu}^{\lambda_n}$ for all $n \in \mathbb{N}$. Since $i)$ or $ii)$ cannot happen, we get an absurd, finishing the proof.
\end{proof}

\begin{proposition} \label{p5}
	Let $\mu$ satisfy $(\ref{30})$ and $\Lambda_* \geq 1$
	constants given in the Proposition \ref{p3}. Then for $\lambda \geq
	\Lambda_*$, there exists $u_{\lambda}$ a solution of $(P_{\lambda})$
	satisfying $u_{\lambda}\in A_{\mu}^{\lambda}\cap
	I_{\lambda}^{c_{\Gamma}}.$
\end{proposition}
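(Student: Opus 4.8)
The plan is to construct $u_\lambda$ as a critical point of $I_\lambda$ lying in $A_\mu^\lambda \cap I_\lambda^{c_\Gamma}$ by a deformation argument built on the minimax level $b_{\lambda,\Gamma}$. The key structural facts already in place are: Proposition \ref{p4} gives a uniform lower bound $\|I'_\lambda(u)\| \ge \sigma_0$ on the annular region $(A_{2\mu}^\lambda \setminus A_\mu^\lambda) \cap I_\lambda^{c_\Gamma}$; Proposition \ref{p1} gives the $(PS)_c$ condition for $c \in [0,S]$ and $\lambda$ large; and the Corollary gives $b_{\lambda,\Gamma} \to c_\Gamma$. First I would suppose, for contradiction, that $I_\lambda$ has no critical point in $A_\mu^\lambda \cap I_\lambda^{c_\Gamma}$. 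Combined with Proposition \ref{p4}, this would yield a uniform lower bound $\|I'_\lambda(u)\| \ge \sigma_0 > 0$ on the \emph{entire} set $A_{2\mu}^\lambda \cap I_\lambda^{c_\Gamma}$ (shrinking $\mu$ or invoking $(PS)$ to rule out critical points on the thinner piece as well), which is precisely the input needed to run a quantitative deformation lemma.

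The central step is to apply a deformation/pseudo-gradient flow to push $\gamma_0$ down below $c_\Gamma$ while keeping it inside $\Sigma_\lambda$, contradicting the lower bound $b_{\lambda,\Gamma} \ge \sum_{j=1}^l c_{\lambda,j}$ from Proposition \ref{p3}(a). Concretely, I would fix a locally Lipschitz pseudo-gradient vector field for $I_\lambda$ and use a cutoff supported in $A_{2\mu}^\lambda \cap I_\lambda^{c_\Gamma}$ to build a deformation $\eta(t,\cdot)$ that decreases $I_\lambda$ on this region and is the identity elsewhere. Applying this flow to $\gamma_0$ produces a new path $\gamma_1 = \eta(T, \gamma_0(\cdot))$ for suitable time $T$. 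I must then verify two things: that $\gamma_1$ still belongs to $\Sigma_\lambda$, i.e. it satisfies boundary condition $(a)$ and the sign condition $(b)$; and that $\max_{\overrightarrow{s} \in \overline{Q}} I_\lambda(\gamma_1(\overrightarrow{s})) < \sum_{j=1}^l c_{\lambda,j}$, which violates the definition of $b_{\lambda,\Gamma}$ and delivers the contradiction.

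Condition $(a)$ is preserved because on $\partial(\overline{Q})$ we have $I_\lambda(\gamma_0(\overrightarrow{s})) < c_\Gamma$ strictly by Proposition \ref{p3}(b), so those points already sit below the deformed level and the cutoff can be arranged to leave them fixed; condition $(b)$ is preserved because the deformation acts trivially outside $\Omega'_\Gamma$, so $I_{\lambda,\mathbb{R}^4\setminus\Omega'_\Gamma}$ is unchanged along the flow and stays nonnegative. The energy drop follows by integrating $\frac{d}{dt} I_\lambda(\eta(t,u)) \le -\sigma_0^2/\text{(const)}$ along the flow wherever the point lies in the region with the uniform gradient bound, using that $b_{\lambda,\Gamma}$ is close to $c_\Gamma$ for $\lambda$ large so that the descent carries the max level strictly below $\sum_j c_{\lambda,j}$.

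The main obstacle I anticipate is the bookkeeping that keeps the deformed path inside $\Sigma_\lambda$ while simultaneously achieving a genuine energy decrease: the cutoff function must vanish near $\partial(\overline{Q})$ and outside $A_{2\mu}^\lambda$ (so that $(a)$ and $(b)$ survive), yet the flow must remain active precisely on the part of the path attaining the maximum, and one must confirm that the maximizing $\overrightarrow{s}$ does not escape to the boundary during the deformation. Ensuring compatibility of these constraints — that the region where descent is needed is exactly the region where Proposition \ref{p4} guarantees $\|I'_\lambda\| \ge \sigma_0$, and that no new maximum is created on the boundary layer — is the delicate heart of the argument. Once the contradiction with $b_{\lambda,\Gamma} \ge \sum_j c_{\lambda,j}$ is reached, the existence of $u_\lambda \in A_\mu^\lambda \cap I_\lambda^{c_\Gamma}$ with $I'_\lambda(u_\lambda)=0$ follows, and being a critical point it solves $(P_\lambda)$.
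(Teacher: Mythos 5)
Your overall strategy---contradiction hypothesis, pseudogradient flow with a cutoff applied to $\gamma_0$, and a contradiction with the minimax characterization of $b_{\lambda,\Gamma}$ for large $\lambda$---is the same as the paper's, but there is a genuine gap at the crucial quantitative step. You claim that the absence of critical points in $A_\mu^\lambda\cap I_\lambda^{c_\Gamma}$, combined with Proposition \ref{p4}, yields the \emph{uniform} bound $\|I_\lambda'(u)\|\geq\sigma_0$ on all of $A_{2\mu}^\lambda\cap I_\lambda^{c_\Gamma}$. It does not: Proposition \ref{p4} gives the $\lambda$-independent constant only on the annulus $\left(A_{2\mu}^\lambda\setminus A_\mu^\lambda\right)\cap I_\lambda^{c_\Gamma}$, while on the inner set $A_\mu^\lambda\cap I_\lambda^{c_\Gamma}$ the $(PS)$ condition plus the no-critical-point hypothesis produces only some $d_\lambda>0$ which depends on $\lambda$ and may be arbitrarily small; shrinking $\mu$ does not repair this, and no uniform bound can hold there, since the whole point of the proposition is that critical points accumulate in $A_\mu^\lambda$ as $\lambda\to\infty$. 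The distinction is not cosmetic: your contradiction requires pushing $\max_{\overrightarrow{s}\in\overline{Q}}I_\lambda(\gamma_1(\overrightarrow{s}))$ below $c_\Gamma-\delta$ with $\delta$ \emph{independent of} $\lambda$, because the quantities you compare against ($\sum_j c_{\lambda,j}$, or $b_{\lambda,\Gamma}$) only tend to $c_\Gamma$; if the energy decrease $\delta_\lambda$ is allowed to shrink with $\lambda$, the inequality $c_\Gamma-\delta_\lambda<\sum_j c_{\lambda,j}$ can fail for every $\lambda$ and no absurdity is reached. Hence your descent estimate $\frac{d}{dt}I_\lambda(\eta(t,u))\leq -\sigma_0^2/C$ along the whole flow is unjustified precisely where it is needed.

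The paper closes this gap with two ingredients your sketch lacks. First, Claim \ref{cl1}: $I_{\lambda}$ (and the localized functionals $I_{\lambda,j}$) is Lipschitz on the relevant ball with a constant $K_*$ \emph{independent of} $\lambda$, proved via the Main Estimate, H\"older and Corollary \ref{cc1}. Second, a two-case analysis with $\tilde d_\lambda=\min\{d_\lambda,\sigma_0\}$ and the $\lambda$-dependent time $T=\sigma_0\mu/(2K_*\tilde d_\lambda)$: either the trajectory stays in $A_{3\mu/2}^\lambda$ up to time $T$, and the decrease is at least $\tilde d_\lambda T=\sigma_0\mu/(2K_*)$; or it crosses the annulus $A_{3\mu/2}^\lambda\setminus A_\mu^\lambda$, and the Lipschitz bound together with $\left\|\frac{d\eta}{dt}\right\|\leq 1$ forces the crossing time to be at least $\mu/(2K_*)$, during which $\|I_\lambda'\|\geq\sigma_0$, again giving the $\lambda$-free decrease $\sigma_0\mu/(2K_*)$. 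This is exactly the ``delicate heart'' your proposal flags but does not supply. Two further justifications in your sketch rest on incorrect mechanisms (though the conclusions are recoverable): condition $(a)$ is preserved not because boundary points lie below level $c_\Gamma$ (the flow does not fix sublevel points), but because $\gamma_0(\overrightarrow{s})\notin A_{2\mu}^\lambda$ for $\overrightarrow{s}\in\partial(\overline{Q})$ and the vector field vanishes off $A_{2\mu}^\lambda$; and condition $(b)$ is preserved not because the flow ``acts trivially outside $\Omega_\Gamma'$''---the pseudogradient deforms $u$ globally in $\mathbb{R}^4$, it is not a spatially localized operation---but because the inequality $I_{\lambda,\mathbb{R}^4\setminus\Omega_\Gamma'}\geq 0$ is built into the definition of $A_{2\mu}^\lambda$, trajectories cannot leave $\overline{A_{2\mu}^\lambda}$ once the field is cut off outside it, and $\gamma_0(\overrightarrow{s})$ is supported in $\overline{\Omega}_\Gamma$ so the untouched points satisfy $(b)$ trivially.
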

\begin{proof}
	We will suppose, by contradiction, that there are no critical points of
	$I_{\lambda}$ in $A_{\mu}^{\lambda}\cap I_{\lambda}^{c_{\Gamma}}.$
	By Proposition $\ref{p1}$, $I_{\lambda}$ satisfies the $(PS)_d$ condition
	for $d \in [0,c_{\Gamma}] $ and $\lambda$ large enough. Thereby, there exists $d_{\lambda} >0$ such that
	$$
	\left|\left|I'_{\lambda}(u)\right|\right| \geq d_{\lambda},\
	\forall u \in A_{\mu}^{\lambda}\cap I_{\lambda}^{c_{\Gamma}}.
	$$
	On the other hand, by Proposition \ref{p4}, 
	$$
	\|I'_{\lambda}(u)\|
	\geq \sigma_0,\ \forall u \in (A_{2\mu}^{\lambda} \setminus
	A_{\mu}^{\lambda})\cap I_{\lambda}^{c_{\Gamma}},
	$$ 
	where $\sigma_0$ is independent of $\lambda.$ Now, let us define the continuous functions
	$\Psi:E_{\lambda}\rightarrow \mathbb{R}$ and
	$H:I_{\lambda}^{c_{\Gamma}}\rightarrow \mathbb{R}$ by
	\begin{align*}
	\Psi(u)= 1,&  \quad u \in  A_{3\mu/2}^{\lambda},\\
	\Psi(u)=0, & \quad u \not \in A_{2\mu}^{\lambda}, \\
	0 \leq \Psi(u)\leq 1,&\quad \mbox{for}\ u \in E_{\lambda},
	\end{align*}
	and
	$$
	H(u)=\left\{ \begin{array}{cc}
	-\Psi(u)\left|\left|Y(u)\right|\right|^{-1}Y(u),&\ u \in A_{2\mu}^{\lambda},\\
	0,& u\not \in A_{2\mu}^{\lambda},
	\end{array}
	\right.
	$$ 
	where $Y$ is a pseudogradient  vector field for $I_{\lambda}$ on 
	$$
	X=\{ u \in E_{\lambda}; I_{\lambda}(u) \neq 0\}.
	$$ 
	Notice that 
	$$
	\left|\left|H(u)\right|\right| \leq 1,\ \mbox{for all}\ \lambda \geq \Lambda_*\ \mbox{and}\ u \in I_{\lambda}^{c_{\Gamma}}. 
	$$
	The above information ensures the existence of a flow $\eta: [0,+\infty) \times I_{\lambda}^{c_{\Gamma}}$ defined by
	$$
	\left\{ \begin{array}{ccc}
	\dfrac{d\eta(t,u)}{dt}&=& H(\eta(t,u))\\
	\eta(0,u)&=&u\in I_{\lambda}^{c_{\Gamma}},
	\end{array}
	\right.
	$$
	verifying
	\begin{equation}\label{35}
	\dfrac{d I_{\lambda}(\eta(t,u))}{dt}\leq -\Psi(\eta(t,u))\left|\left|I'_{\lambda}(\eta(t,u))\right|\right|\leq 0,
	\end{equation}
	\begin{equation}\label{36}
	\left|\left|\dfrac{d\eta}{dt}\right|\right|= \left|\left|H(\eta)\right|\right|\leq 1,
	\end{equation}
	and
	\begin{equation}\label{37}
	\eta(t,u)=u,\ \forall\ t\geq 0\ \mbox{and}\ u \in I_{\lambda}^{c_{\Gamma}}\setminus A_{2 \mu}^{\lambda}.
	\end{equation}
In what follows, we set
$$
\beta(\overrightarrow{s})=\eta(T,\gamma_0(\overrightarrow{s})),\
	\forall \overrightarrow{s}\in \overline{Q},
$$ 
where $T>0$ will be fixed later on. 

Since 
$$
	\gamma_0(\overrightarrow{s})\not \in A_{2 \mu}^{\lambda},\ \forall
	\overrightarrow{s}\in \partial (\overline{Q}),
$$ 
we deduce that
$$
	\beta(\overrightarrow{s})=\gamma_0(\overrightarrow{s}),\
	\forall \overrightarrow{s}\in \partial(\overline{Q}).
$$ 
Moreover, 	it is easy to check that
$$
I_{\lambda,\mathbb{R}^4\setminus 	\Omega'_{\Gamma}}(\beta(\overrightarrow{s}))\geq 0, \quad \forall \overrightarrow{s}\in \overline{Q},
$$
showing that $\beta \in \Sigma_{\lambda}$.
	
	Note that $supp(\gamma_0(\overrightarrow{s})) \subset
	\overline{\Omega}_{\Gamma}$ for all $\overrightarrow{s} \in
	\overline{Q}$ and  $I_{\lambda}(\gamma_0(\overrightarrow{s}))$
	independent of $\lambda \geq \Lambda.$ Furthermore,
	$$
	I_{\lambda}(\gamma_0(\overrightarrow{s}))\leq c_{\Gamma}, \forall \overrightarrow{s}\in \overline{Q}
	$$
	and
	$$
	I_{\lambda}(\gamma_0(\overrightarrow{s}))= c_{\Gamma},\ \mbox{if}\ s_j=1, \forall j \in \Gamma. 
	$$
	Therefore,
	$$
	m_0=max\left\{ I_{\lambda}(u);u \in \gamma_0(\overline{Q})\setminus A_{\mu}^{\lambda}\right\}< c_{\Gamma},
	$$
	and $m_0$ is independent of $\lambda$.
	
	\begin{claim}\label{cl1}
		There exists a constant $K_*>0$ such that
		$$
		|I_{\lambda_n}(u)-I_{\lambda_n}(v)|\leq K_*\|u-v\|_{\lambda_n}
		$$
		for all $u,v\in \overline{B}_{(M+3)/4}(0)$.
	\end{claim}
	In fact, let $u,v\in\overline{B}_{(M+3)/4}(0)$, there is $K>0$ such that
	$$
	|\langle I_{\lambda_n}'(tu+(1-t)v),w\rangle|\leq K,~~~\forall w\in E_{\lambda_n},~\|w\|_{\lambda_n}\leq 1.
	$$
	Since,
	$$
	|\langle I_{\lambda_n}'(tu+(1-t)v),w\rangle|\leq \dfrac{M+3}{4}+\int_{\R^2}|f(tu+(1-t)v)w|,
	$$
	we only need to prove the boundedness of the above integral. Using the Main Estimate on $f$ ,
	\begin{equation}\label{equa1}
	\int_{\R^2}|g(tu+(1-t)v)w|\leq\dfrac{M+3}{2}+C\int_{\R^2}|w|b_\tau(tu+(1-t)v).
	\end{equation}
	By the H\"older's inequality,
	\begin{equation}\label{equa2}
	\int_{\R^2}|w|b_\tau(tu+(1-t)v)\leq |w|_{q'}|b_\tau(tu+(1-t)v)|_q,
	\end{equation}
	where $1/q+1/q'=1$. Since $M<1$,
	$$
	\|tu+(1-t)v\|_{\lambda_n}\leq t\|u\|_{\lambda_n}+(1-t)\|v\|_{\lambda_n}\leq\dfrac{M+3}{4}<1.
	$$
	Then, we can take $q>1$, $q$ near $1$, such that $
	q\tau(M+3)/4<1$. Thus, from Corollary \ref{cc1}
	\begin{equation}\label{equa3}
	|b_\tau(tu+(1-t)v)|_q\leq C,~~\forall t\in[0,1],~u,v\in B_{(M+3)/4}(0).
	\end{equation}
	Therefore, from $(\ref{equa1})$, $(\ref{equa2})$ and $(\ref{equa3})$,
	$$
	\int_{\R^2}|f(tu+(1-t)v)w|\leq C,~~~~\forall u,v \in \overline{B}_{(M+3)/4}(0),~ t\in[0,1],~\|w\|_\lambda\leq 1,
	$$
	showing that Claim \ref{cl1} holds.
	
	\vspace{0.5 cm}
	
	From Claim $\ref{cl1}$ follows that if $T$ is large enough, the estimate below holds 
	\begin{equation} \label{38}
	\max_{\overrightarrow{s} \in
		\overline{Q})}I_{\lambda}\left(
	\beta(\overrightarrow{s})\right)\leq
	\max\{m_0,c_{\Gamma}-\frac{1}{2K_*}\sigma_0\mu\}.
	\end{equation}
	Indeed, fixing $u=\gamma_0(\overrightarrow{s}) \in E_{\lambda}.$ If $u
	\not \in A_{\mu}^{\lambda},$ we have 
	$$
	I_{\lambda}\left(
	\eta(t,u)\right)) \leq I_{\lambda}\left(
	\eta(0,u)\right))=I_{\lambda}(u)\leq m_0, \quad \forall t \geq 0.
	$$ 
	On the other hand, if $u \in A_{\mu}^{\lambda}$, let us fix $\tilde{\eta}(t)=\eta(t,u). $ Taking
	$\tilde{d}_{\lambda}=\min\{d_{\lambda},\sigma_0\}$ and
	$T=\frac{\sigma_0 \mu}{2 K_*d_{\lambda}}>0,$ we will analyze the following cases: \\
	
	\noindent \textbf{Case 1:} $\tilde{\eta}(t) \in A_{3\mu/2}^{\lambda}, \forall
	t \in [0,T].$ \\
	
	\noindent \textbf{Case 2:} $\tilde{\eta}(t_0) \in \partial
	A_{3\mu/2}^{\lambda},\ \mbox{for some}\ t_0 \in [0,T].$\\

	\noindent {\bf Analysis of the Case 1:} \,  In this case,
	$$
	\Psi(\tilde{\eta}(t))\equiv 1, \quad \forall t \in [0,T].
	$$ 
	and 
	$$
	\| I'_{\lambda}(\tilde{\eta}(t))\| \geq \tilde{d}_{\lambda}, \forall t \in [0,T].
	$$
	Hence,
	$$
	I_{\lambda}(\tilde{\eta}(T))=I_{\lambda}(u)+\int_{0}^{T}\frac{d}{ds}I_{\lambda}(\tilde{\eta}(s))ds \leq c_{\Gamma}-\int_{0}^{T}\tilde{d}_{\lambda}ds,
	$$
	it following that
	$$
	I_{\lambda}(\tilde{\eta}(T))\leq c_{\Gamma}-\tilde{d}_{\lambda}T=c_{\Gamma}-\frac{1}{2K_*}\sigma_0\mu.
	$$
	
	\noindent {\bf Analysis of the Case  2:} \, Let $0 \leq t_1 \leq t_2 \leq T,$ satisfying
	$\tilde{\eta}(t_1)\in \partial A_{\mu}^{\lambda},$
	$\tilde{\eta}(t_2)\in \partial A_{3\mu/2}^{\lambda}$ and
	$\tilde{\eta}(t)\in  A_{3\mu/2}^{\lambda}\setminus
	A_{\mu}^{\lambda}, \forall t\in [t_1,t_2].$ Then
	\begin{equation}\label{39}
	\|\tilde{\eta}(t_1)-\tilde{\eta}(t_2)\|\geq \frac{1}{2K_*}\mu.
	\end{equation}
	Indeed, denoting $w_1=\tilde{\eta}(t)$ and $w_2=\tilde{\eta}(t_2),$
	it follows that 
	$$
	\|w_2\|_{\lambda,\mathbb{R^N}\setminus
		\Omega'_{\Gamma}}=\frac{3}{2} \mu \quad \mbox{or} \quad 
	\left|I_{\lambda,j_0}(w_2)-c_{j_0}\right|=\frac{3}{2} \mu.
	$$
	From definition of $A_{\mu}^{\lambda},$ we have $\|w_2\|_{\lambda,\mathbb{R^N}\setminus\Omega'_{\Gamma}}\leq \mu. $ Thus,
	$$
	\|w_2-w_1\|_{\lambda} \geq \frac{1}{K_*}\left|I_{\lambda,j_0}(w_2)-I_{\lambda,j_0}(w_1)\right|\geq \frac{1}{2K_*}\mu.
	$$
	By Mean Value Theorem
	\begin{equation}\label{40}
	\|\tilde{\eta}(t_1)-\tilde{\eta}(t_2)\|_{\lambda}\le\left|\left|\dfrac{d\eta}{dt}\right|\right|\left| t_1-t_2\right|.
	\end{equation}
	As $\left|\left|\dfrac{d\eta}{dt}\right|\right| \le 1$, from (\ref{39}) and (\ref{40}),   
	$$
	\left| t_1-t_2\right| \ge \frac{1}{2K_*}\mu.
	$$
	Hence
	$$
	I_{\lambda}(\tilde{\eta}(T))\leq I_{\lambda}(u)-\int_{0}^{T}\Psi(\tilde{\eta}(s))\|I'_{\lambda}(\tilde{\eta}(s))\|ds,
	$$
	and so,
	$$
	I_{\lambda}(\tilde{\eta}(T))\leq c_{\Gamma}-\int_{t_1}^{t_2}\sigma_0ds \leq c_{\Gamma}-\frac{\sigma_0}{2K_*}\mu,
	$$
	proving (\ref{38}).
	
	Thereby,  
	$$
	b_{\lambda,\Gamma}\leq \max_{[1/R^2,1]^l}I_{\lambda}(\widehat{\eta}(s_1,\cdots,s_l))\leq \max\{m_0,c_{\Gamma}-\frac{1}{2K*}\sigma_0 \mu\} < c_{\Gamma},
	$$ 
	which is an absurd, because $b_{\lambda, \Gamma}\rightarrow c_{\Gamma},$ when $\lambda \rightarrow \infty.$
\end{proof}

\vspace{0.5 cm}

Thus, we can conclude that $I_{\lambda}$ has a solution $u_{\lambda}$ in $A_{\mu}^{\lambda}$ for $\lambda$ large enough.

\vspace{0.5 cm}

\noindent \textbf{Completion of the Proof of Theorem \ref{T1}:}

\vspace{0.5 cm}

From the Proposition \ref{p5} there exists $\{u_{\lambda_n}\}$ with
$\lambda_n \rightarrow +\infty$ satisfying:
$$I'_{\lambda_n}(u_{\lambda_n})=0,$$
$$
\|u_{\lambda_n}\|_{\lambda_n,\mathbb{R}^N\setminus \Omega'_{\Gamma}} \rightarrow 0
$$
and
$$
I_{\lambda_n,j}(u_{\lambda_n})\rightarrow c_j, \forall j \in \Gamma
$$
Therefore, from of Proposition \ref{p2},
$$
u_{\lambda_n} \rightarrow u \quad \mbox{in} \quad  H^2(\mathbb{R}^4)\ \mbox{with}\ u\ \in H_{0}^{2}(\Omega_{\Gamma}).
$$
Moreover, $u$ is a nontrivial solution of
\begin{equation} \label{PF}
		\left\{ \begin{array}{c}
		\Delta^2 u  +u  =  f(u),\mbox{in}\ \Omega_j \ \\
		u=\dfrac{\partial u}{\partial \eta} =0,\ \mbox{on}\  \partial\Omega_j,
		\end{array}
		\right.
		\end{equation}
with $I_j(u)=c_j$ for all $i \in \Gamma$.  Now, we claim that $u=0$ in $\Omega_j$, for all $j \notin \Gamma$. Indeed, it possible to prove that there is $\sigma_1>0$, which is independent of $j$, such that if $v$ is a  nontrivial solution of (\ref{PF}), then
$$
\|v\|_{H_0^{2}(\Omega_j)} \geq \sigma_1.
$$
However, the solution $u$ verifies 
$$
\|u\|_{H^{2}(\mathbb{R}^{4} \setminus \Omega'_\Gamma)}=0,
$$
showing that $u=0$ in $\Omega_j$, for all $j \notin \Gamma$. This finishes the proof of Theorem \ref{T1}.

\vspace{0.2cm}


\begin{thebibliography}{99}
	
	
	\bibitem {A-M-S}C. O. Alves, D.C. De Morais Filho and M. A. S. Souto, Multiplicity of positive solutions for a class of problems with critical growth in $\mathbb{R}^N$, Proc. Edinb. Math. Soc. \textbf{52} (2009), 1-21.
	
	
	 \bibitem{AN} C.O. Alves and A.B. N\'obrega, Existence of multi-bump solutions for a class of elliptic problems involving the biharmonic operator, arXiv:1602.03112v1 [math.AP].
	 
	\bibitem{AP} C.O. Alves and D.S. Pereira, Multiplicity of Multi-Bump type nodal solutions for a class of elliptic problems with exponential critical growth in $\mathbb{R}^2$, arXiv:1412.4219v1 [math.AP].
	
	\bibitem{AS} C. O. Alves and M. A. S. Souto, Multiplicity of positive solutions for a
	class of problems with exponential critical growth in $\mathbb{R}^{2}$, J. Differential
	Equations \textbf{244} (2008), 1502-1520.
	
	\bibitem {B&W1}T. Bartsch and Z.Q. Wang, Existence and multiplicity results for some superlinear
	elliptic problems on $\mathbb{R}^N$, Comm. Partial Differential Equations \textbf{ 20 } (1995),  1725-1741.
	
	\bibitem {B&W2}T. Bartsch and Z.Q. Wang, Multiple positive solutions for a nonlinear Schr\"odinger
	equation, Z. Angew. Math. Phys. \textbf{ 51} (2000),   366-384.
	
	
	\bibitem{BDF} D. Bucur and F. Gazzola, The first biharmonic Steklov eigenvalue: positivity preserving and shape optimization. Milan J. Math. \textbf{79} (2011), 247-258.
	
	\bibitem {Del&Felm}M. Del Pino and P.L. Felmer, Local mountain passes for semilinear elliptic problems in
	unbounded domains, Calc. Var. Partial Differential Equations, \textbf{ 4 } (1996), 121-137.
	
	\bibitem {D&T}Y.H. Ding  and K.Tanaka , Multiplicity of Positive Solutions of a Nonlinear
	Schr\"odinger Equation, Manuscripta Math. \textbf{112} (2003), 109-135.
	
	\bibitem{DMR} {D.G. de Figueiredo, O.H. Miyagaki and B. Ruf},  Elliptic equations in $\R^2$ with nonlinearities in the critical growth range, {Calc. Var. Partial Differential Equations} \textbf{3} (1995), 139-153.
	
	\bibitem{FG} A. Ferrero and F. Gazzola, A partially hinged rectangular plate as a model for suspension bridges. preprint (2013).
	
	\bibitem{G}F. Gazzola, H. Grunau and G. Sweers, Polyharmonic boundary value problems, Lectures notes in mathematics,1991. Springer-Verlag , Berlin, 2010.
	
	\bibitem{GK} C. P. Gupta and Y. C. Kwong, Biharmonic eigen-value problems and $L^ p$ estimates. Int. J. Math. Math. Sci. \textbf{13} (1990), 469-480.
	
	
	\bibitem{LM} A. C Lazer and P. J. McKenna, Large-amplitude periodic oscillations in suspension bridges: some new connections with nonlinear analysis. Siam Rev. \textbf{32} (1990),  537-578.
	
		


	\bibitem{RS} B. Ruf and F. Sani, Sharp Adams-type inequalities in $\mathbb{R}^N$. Trans. Amer. Math. Soc. \textbf{365} (2013), 645–670. 
	
	\bibitem{S} F. Sani, A biharmonic equation in $\mathbb{R}^4$ involving nonlinearities with critical exponential growth. Commun. Pure Appl. Anal. \textbf{12} (2013), 405–428. 
	


	
	
	
		
	
	
	
	

	
		

	
		
	
	
\end{thebibliography}
\end{document}